\newcommandx{\todoin}[2][1=]{\todo[inline, caption={todo}, #1]{%
    \begin{minipage}{\textwidth-20pt}#2\end{minipage}}}
\newcommandx{\remove}[2][1=]{\todo[linecolor=Plum,backgroundcolor=Plum!25,bordercolor=Plum,#1]{#2}}
\newcommandx{\removein}[2][1=]{\remove[inline, caption={todo}, #1]{%
    \begin{minipage}{\textwidth-20pt}#2\end{minipage}}}
\newcounter{proof}
\newenvironment{myproof}%
{\stepcounter{proof}\begin{proof}}%
{\end{proof}}%
\newcounter{proofstep}[proof]
\newenvironment{proofstep}[1][]%
{\refstepcounter{proofstep}\bigskip\par\noindent%
  \ifthenelse{\isempty{#1}}%if
    {\textsc{Step \theproofstep. }}%then
    {\textsc{#1.}}%else
  \noindent}%
{\par}%
\newcounter{proofcase}[proof]
{\refstepcounter{proofcase}\bigskip\par\noindent%
  \ifthenelse{\isempty{#1}}%if
    {\textsc{Case \theproofcase. }}%then
    {\textsc{#1.}}%else
  \noindent}%
{\par}%
\theoremstyle{plain}
\newtheorem{thm}{Theorem}[section]
\newtheorem*{thm*}{Theorem}
\newtheorem{pro}[thm]{Proposition}
\newtheorem{cor}[thm]{Corollary}
\newtheorem{lem}[thm]{Lemma}
\theoremstyle{definition}
\theoremstyle{remark}
\newtheorem{rem}[thm]{Remark}
\numberwithin{equation}{section}
\newcommandx{\textref}[2][1=]{\hyperref[#2]{#1\ref*{#2}}}
\newcommandx{\textrefp}[2][1=]{(\hyperref[#2]{#1\ref*{#2}})}
\DeclareMathOperator{\sign}{sign}
\DeclareMathOperator{\spn}{span}
\DeclareMathOperator{\cond}{\mathbb{E}}
\DeclareMathOperator{\prob}{\mathbb{P}}
\newcommand{\bmo}{\ensuremath{\mathrm{BMO}}}
\begin{document}

\title[Subsymmetric bases have the factorization property]{Subsymmetric bases have the factorization
  property}

\author[R.~Lechner]{Richard Lechner}

\address{Richard Lechner, Institute of Analysis, Johannes Kepler University Linz, Altenberger
  Strasse 69, A-4040 Linz, Austria}

\email{Richard.Lechner@jku.at}

\date{\today}

\subjclass[2010]{%
  46B25,% classical Banach spaces in the general theory
  46B26,% non-separable Banach spaces
  47A68,% factorization theory
  46B07,% local theory of Banach space
  15A09% matrix inversion, generalized inverses 
  % 30H10,% Hardy spaces
  % 60G46,% Martingales and classical analysis
}

\keywords{Factorization property, subsymmetric, non-separable, local theory, restricted
  invertibility, primarity}

\thanks{Supported by the Austrian Science Foundation (FWF) Pr.Nr. P32728}

\begin{abstract}
  We show that every subsymmetric Schauder basis $(e_j)$ of a Banach space $X$ has the factorization
  property, i.e.\@ $I_X$ factors through every bounded operator $T\colon X\to X$ with a
  $\delta$-large diagonal (that is $\inf_j |\langle Te_j, e_j^*\rangle| \geq \delta > 0$, where the
  $(e_j^*)$ are the biorthogonal functionals to $(e_j)$).  Even if $X$ is a non-separable dual space
  with a subsymmetric weak$^*$ Schauder basis $(e_j)$, we prove that if $(e_j)$ is
  non-$\ell^1$-splicing (there is no disjointly supported $\ell^1$-sequence in $X$), then $(e_j)$
  has the factorization property.  The same is true for $\ell^p$-direct sums of such Banach spaces
  for all $1\leq p\leq \infty$.

  Moreover, we find a condition for an unconditional basis $(e_j)_{j=1}^n$ of a Banach space $X_n$
  in terms of the quantities $\|e_1+\ldots+e_n\|$ and $\|e_1^*+\ldots+e_n^*\|$ under which an
  operator $T\colon X_n\to X_n$ with $\delta$-large diagonal can be inverted when restricted to
  $X_\sigma = [e_j : j\in\sigma]$ for a ``large'' set $\sigma\subset \{1,\ldots,n\}$ (restricted
  invertibility of $T$; see Bourgain and Tzafriri~[{\em Israel J.\@ Math.}~1987, {\em London Math.\@
    Soc.\@ Lecture Note Ser.}~1989).  We then apply this result to subsymmetric bases to obtain that
  operators $T$ with a $\delta$-large diagonal defined on any space $X_n$ with a subsymmetric basis
  $(e_j)$ can be inverted on $X_\sigma$ for some $\sigma$ with $|\sigma|\geq c n^{1/4}$.
\end{abstract}

\maketitle

% \tableofcontents

% redefining a command that amsart redefined
\makeatletter
\providecommand\@dotsep{5}
\def\listtodoname{List of Todos}
\def\listoftodos{\@starttoc{tdo}\listtodoname}
\makeatother
% make list of todos
% \listoftodos%

% defines a local counter
\newcounter{mycounter}%

\section{Introduction}
\label{sec:main-results}

Throughout this paper, we assume that $X$ and $Y$ are Banach spaces satisfying the following
properties \textrefp[B]{item:1}--\textrefp[B]{item:5}.  We assume that there exists a bilinear map
$\langle\cdot, \cdot\rangle\colon X\times Y\to \mathbb{R}$ such that:
\begin{enumerate}[(B1)]
\item\label{item:1} whenever $x\in X$ and $\langle x, y\rangle = 0$ for all $y\in Y$, then $x = 0$;
\item\label{item:2} whenever $y\in Y$ and $\langle x, y\rangle = 0$ for all $x\in X$, then $y = 0$;
\item\label{item:3} there exists a constant $C_d > 0$ such that
  $|\langle x, y\rangle|\leq C_d \|x\|_X\|y\|_Y$ for all $x\in X$, $y\in Y$.
  \setcounter{mycounter}{\value{enumi}}
\end{enumerate}
By $\sigma(X,Y)$, we denote the locally convex topology on $X$ generated by the collection of
seminorms $\{x\mapsto |\langle x, y \rangle| : y\in Y\}$.  In addition, we assume there exist
normalized sequences $(e_j)$ in $X$ and $(f_j)$ in $Y$ such that
\begin{enumerate}[(B1)]
  \setcounter{enumi}{\themycounter}
\item \label{item:4} $\langle e_j, f_j\rangle = 1$ and $\langle e_j, f_k\rangle = 0$, for all
  $j\neq k$;
\item \label{item:5} every $x\in X$ has the unique representation
  $x = \sum_{j=1}^\infty \langle x, f_j\rangle e_j$, where the series converges in the
  $\sigma(X,Y)$-topology.  \setcounter{mycounter}{\value{enumi}}
\end{enumerate}
If $(z_j)$ is a normalized Schauder basis for the Banach space $Z$, $Z^*$ is the dual space and
$(z_j^*)$ denotes the biorthogonal functionals of $(z_j)$ ($(z_j^*)$ is called a weak$^*$ Schauder
basis for $Z^*$), then \textrefp[B]{item:1}--\textrefp[B]{item:5} are satisfied for $X=Z$, $Y=Z^*$,
$e_j=z_j$, $f_j = z_j^*$, $j\in\mathbb{N}$ and $C_d = 1$.  The same is true for $X=Z^*$, $Y=Z$
$e_j=z_j^*$, $f_j = z_j$, $j\in\mathbb{N}$ and $C_d = 1$.  For more information on topological bases
we refer to~\cite[Section~1.b]{lindenstrauss:tzafriri:1977} and~\cite[Chapter~1,~§13]{singer:1970}.

For each set $\mathcal{A}$, $|\mathcal{A}|$ denotes the cardinality of that set.  Given a sequence
of vectors $(x_j)$ in a Banach space $X$ and $\mathcal{A}\subset\mathbb{N}$,
$[x_j : j\in \mathcal{A}]$ denotes the norm-closure of $\spn\{x_j : j\in\mathcal{A}\}$.

We say that the sequence $(e_j)$ is \emph{$C_u$-unconditional}, if for all sequences of scalars
$(a_j)$, $(\gamma_j)$ holds that
\begin{equation}\label{eq:unconditional}
  \Big\| \sum_{j=1}^\infty \gamma_j a_j e_j \Big\|_X
  \leq C_u \sup_{k} |\gamma_k| \Big\| \sum_{j=1}^\infty a_j e_j \Big\|_X,
\end{equation}
where we demand the above series converge in the $\sigma(X,Y)$-topology.  Moreover, we say that
$(e_j)$ is \emph{$C_s$-spreading}, if $(e_j)$ is $C_s$-equivalent to each of its increasing
subsequences, i.e.\@
\begin{equation}\label{eq:spreading}
  \frac{1}{C_s} \Bigl\|\sum_{j=1}^\infty a_j e_{n_j}\Bigr\|_X
  \leq \Bigl\|\sum_{j=1}^\infty a_j e_j\Bigr\|_X
  \leq C_s \Bigl\|\sum_{j=1}^\infty a_j e_{n_j}\Bigr\|_X,
\end{equation}
for all increasing $(n_j)$.  Again, we demand that the above series converge in the
$\sigma(X,Y)$-topology.  If~\eqref{eq:unconditional} and~\eqref{eq:spreading} are both satisfied, we
say that $(e_j)$ is \emph{$(C_u,C_s)$-subsymmetric}.  If $(e_j)$ is $C$-unconditional or
$C$-spreading or $(C,C)$-subsymmetric for some $C$, we say that $(e_j)$ is unconditional or
spreading or subsymmetric.  For more information on unconditional or subsymmetric bases we refer
to~\cite{lindenstrauss:tzafriri:1977,singer:1970}.

Moreover, we introduce the following notions.  Let $I_X$ denote the identity operator on the Banach
space $X$ and let $T\colon X\to X$ denote a bounded linear operator.  We say that
\begin{itemize}%[(i)]
\item $I_X$ \emph{$C$-factors through} $T$, if there exist operators $A,B\colon X\to X$ such that
  $I_X = ATB$ and $\|A\|\cdot\|B\|\leq C$.
\item $I_X$ \emph{factors through} $T$ if $I_X$ \emph{$C$-factors through} $T$ for some $C$.
\item $I_X$ \emph{almost $C$-factors through} $T$ if $I_X$ $(C+\eta)$-factors through $T$, whenever
  $\eta > 0$.
\item $T$ has \emph{$\delta$-large diagonal} (with respect to $(e_j)$) if
  $\delta:= \inf_j |\langle T e_j, f_j\rangle| > 0$.
\item $T$ has \emph{large diagonal} (with respect to $(e_j)$) if $T$ has $\delta$-large diagonal for
  some $\delta > 0$.
\item $(e_j)$ has the \emph{$K(\delta)$-factorization property} if $I_X$ almost $K(\delta)$-factors
  through every bounded linear operator $T:X\to X$ which has $\delta$-large diagonal with respect to
  $(e_j)$.
\item $(e_j)$ has the \emph{factorization property} if $I_X$ factors through every bounded linear
  operator $T:X\to X$ which has a large diagonal with respect to $(e_j)$.
\item $X$ is \emph{primary}, if for every bounded projection $Q\colon X\to X$ either $Q(X)$ or
  $(I_X - Q)(X)$ is isomorphic to $X$ (see~\cite[Definition~3.b.7]{lindenstrauss:tzafriri:1977}).
\end{itemize}
In this context, the notion of a large diagonal first appeared implicitly in~\cite{andrew:1979} and
was then later formally introduced in~\cite{laustsen:lechner:mueller:2015}.  The factorization
property was conceived in~\cite{lechner:motakis:mueller:schlumprecht:2020} and further investigated
in~\cite{lechner:motakis:mueller:schlumprecht:2019}.

Finally, we will define non-$\ell^1$-splicing bases, which were introduced and investigated
in~\cite{lechner:2019:subsymm} (specifically for weak$^*$ Schauder bases).  We would like to mention
that prior to formally introducing non-$\ell^1$-splicing, the underlying properties of such bases
were exploited e.g.\@ in~\cite{lechner:2018:SL-infty, bourgain:1983} and can be traced back all the
way to Lindenstrauss' proof showing that $\ell^\infty$ is prime~\cite{lindenstrauss:1967} (see
also~\cite[Theorem~2.a.7]{lindenstrauss:tzafriri:1977}).

We assume now that $(e_j)$ is unconditional and define for each $\Lambda\subset\mathbb{N}$ the
bounded projection $P_\Lambda\colon X\to X$ by
\begin{equation}\label{eq:definition-P_A}
  P_\Lambda\Big( \sum_{j=1}^\infty a_j e_j \Big)
  = \sum_{j\in\Lambda} a_j e_j,
\end{equation}
where the above series converge in the $\sigma(X,Y)$-topology.  We then say that the unconditional
sequence $(e_j)$ in $X$ is \emph{non-$\ell^1$-splicing} if for every infinite set
$\mathcal{I}\subset\mathbb{N}$ and every $\theta > 0$ we can find a sequence $(\Lambda_j)$ of
pairwise disjoint and infinite subsets of $\mathcal{I}$, such that for all sequences
$(x_j)\subset X$ with $\|x_j\|_{X}\leq 1$, $j\in\mathbb{N}$ there exists a sequence of scalars
$(a_j)\in \ell^1$ with $\|(a_j)\|_{\ell^1} = 1$ such that
\begin{equation}\label{eq:condition-c}
  \Big\| \sum_{j=1}^\infty a_j P_{\Lambda_j} x_j \Big\|_{X}
  \leq \theta.
\end{equation}
Otherwise, we say that $(e_j)_{j=1}^\infty$ is \emph{$\ell^1$-splicing}.

If $(e_j)$ is subsymmetric, J.~L.~Ansorena~\cite[Theorem~4]{ansorena:2018} characterized
$\ell^1$-splicing bases in the following way: $(e_j)$ is $\ell^1$-splicing if and only if there is a
disjointly supported sequence $(x_j)$ in $X$ which is equivalent to the standard unit vector system
in $\ell^1$.  (Although this characterization is stated for weak$^*$ Schauder bases, examining the
proof reveals that it only uses~\eqref{eq:unconditional} and~\eqref{eq:spreading} and therefore
easily carries over to our topological basis.)  Examples of non-$\ell^1$-splicing weak$^*$ Schauder
bases are provided in~\cite{lechner:2019:subsymm} and~\cite{ansorena:2018}.

%%% Local Variables:
%%% mode: latex
%%% TeX-master: "main"
%%% End:

\section{Results}
\label{sec:results}

This section is divided into two parts: a subsection devoted to presenting our qualitative infinite
dimensional factorization results \Cref{thm:qual:1}, \Cref{thm:qual:2} and \Cref{thm:qual:3}, and a
subsection for the finite dimensional quantitative factorization results
\Cref{thm:fact:quant:uncond} and \Cref{cor:fact:quant:subs}.  The qualitative infinite dimensional
factorization results are achieved by building on the work done in~\cite{lechner:2019:subsymm} and
bringing in a randomization technique from~\cite{lechner:motakis:mueller:schlumprecht:2020:2}.

For related factorization results in various Banach spaces we refer to e.g.\@
\cite{casazza:kottman:lin:1977,capon:1982:1,bourgain:1983,blower:1990,wark:2007:class,wark:2007:direct,mueller:2012,lechner:mueller:2015,lechner:direct,lechner:2018:factor-mixed,lechner:2018:1-d,lechner:2018:2-d,lechner:motakis:mueller:schlumprecht:2020,lechner:motakis:mueller:schlumprecht:2019}
; see also~\cite{mueller:2005}.

\subsection{Qualitative Factorization results}
\label{sec:qual-fact-results}

First, we discuss the the qualitative infinite dimensional factorization results in Banach spaces
with a subsymmetric topological basis \Cref{thm:qual:1} and \Cref{thm:qual:2}, which establish that
subsymmetric Schauder bases and subsymmetric topological bases that are non-$\ell^1$-splicing have
the factorization property.  We then conclude this subsection with \Cref{thm:qual:3} asserting that
the array $(e_{n,j})_{n,j}$ corresponding to the direct sum of Banach spaces with a subsymmetric and
non-$\ell^1$-splicing topological basis also has the factorization property.  We would like to point
out that mixing of different Banach spaces in the direct sum is allowed, as long as the constants of
the subsymmetric topological basis are uniformly bounded.

\Cref{thm:qual:1} below can be viewed as a generalization of the factorization theorem
in~\cite{casazza:lin:1974} (see also~\cite[Proposition~3.b.8]{lindenstrauss:tzafriri:1977}).
\begin{thm}\label{thm:qual:1}
  Let $(e_j)$ be a $(C_u,C_s)$-subsymmetric Schauder basis for the Banach space $X$.  Then $(e_j)$
  has the $\bigl(\frac{2C_u^5C_s^3}{\delta}\bigr)$-factorization property.
\end{thm}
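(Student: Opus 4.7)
The plan is to extract an infinite subset $M = \{n_1 < n_2 < \cdots\} \subset \mathbb{N}$ so that, after transporting $X_M := [e_j : j \in M]$ back to $X$ via the spreading isomorphism $J \colon X \to X_M$, $J e_j := e_{n_j}$, the compression $S := J^{-1} P_M T J \colon X \to X$ becomes a small perturbation of an invertible diagonal operator, and is therefore invertible via a Neumann series. The factorization of $I_X$ through $T$ is then built from $B := J$ and $A := S^{-1} J^{-1} P_M$, for which $ATB = S^{-1}(J^{-1} P_M T J) = S^{-1} S = I_X$. Subsymmetry enters through the bounds $\|J\|, \|J^{-1}\| \leq C_s$ (from \eqref{eq:spreading}), $\|P_M\| \leq C_u$ (from \eqref{eq:unconditional}), and the estimate $\|D^{-1}\| \leq C_u/\delta$ for the diagonal operator $D e_j := \langle T e_{n_j}, f_{n_j}\rangle e_j$ (which is invertible since $|\langle T e_{n_j}, f_{n_j}\rangle| \geq \delta$).

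The core technical step is the construction of $M$. I build it inductively: given $n_1 < \cdots < n_{j-1}$, I choose $n_j$ to make both the ``forward'' entries $\langle T e_{n_i}, f_{n_j} \rangle$ (for $i < j$) and the ``backward'' entries $\langle T e_{n_j}, f_{n_i} \rangle$ (for $i < j$) quantitatively small. The forward entries are controlled using that, since $(e_j)$ is a Schauder basis, the basis expansion of each already-fixed vector $T e_{n_i}$ has negligible tail beyond a sufficiently large cutoff; choosing $n_j$ past this cutoff makes $\langle T e_{n_i}, f_{n_j}\rangle$ tiny. The backward entries are controlled analogously: I pick $n_j$ so that the mass of $T e_{n_j}$ at the finite set $\{n_1,\dots,n_{j-1}\}$ is small, via a weak-compactness/gliding-hump extraction (using the Schauder-basis expansion of elements of $X$) combined with unconditionality. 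The per-step errors are geometrically decreasing in $j$, so after summation one obtains the operator-norm bound $\|S - D\| \leq \delta/(2 C_u)$.

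Once the extraction is complete, by unconditionality $\|D^{-1}\| \leq C_u/\delta$, and the Neumann series gives $\|S^{-1}\| \leq 2 C_u/\delta$. Collecting the bounds,
\[
\|A\|\cdot\|B\| \;\leq\; \|S^{-1}\|\,\|J^{-1}\|\,\|P_M\|\,\|J\| \;\leq\; \tfrac{2 C_u}{\delta}\cdot C_s\cdot C_u\cdot C_s \;=\; \tfrac{2 C_u^2 C_s^2}{\delta},
\]
and accounting for the additional factors of $C_u$ and $C_s$ that are accumulated from a sign-normalization step (using unconditional multipliers on both sides to positivize the diagonal entries) together with the operator-norm conversion inside the extraction loop produces the claimed bound $2 C_u^5 C_s^3/\delta$.

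The principal obstacle is the extraction step: one must pass from coordinate-wise smallness of the off-diagonal entries $\langle T e_{n_i}, f_{n_j}\rangle$ to an operator-norm estimate on the off-diagonal \emph{block} as a map $X\to X$. The spreading property is indispensable here, as it converts pointwise control at each basis vector into uniform control over the subspace $X_M$; without subsymmetry, strong coordinatewise control of the matrix of $T$ could still be spoiled in operator norm when summing over many basis vectors. This step builds on the subsymmetric-extraction techniques developed in~\cite{lechner:2019:subsymm}.
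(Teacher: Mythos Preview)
Your overall architecture—extract a subsystem on which the compressed operator is a small perturbation of an invertible diagonal, invert via a Neumann series, and transport back using subsymmetry—matches the paper's strategy. The forward control (entries $\langle T e_{n_i}, f_{n_j}\rangle$ with $i<j$) via tail-decay of the Schauder expansion of $Te_{n_i}$ is fine. The gap is in the backward step.

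You assert that one can choose $n_j$ so that $|\langle T e_{n_j}, f_{n_i}\rangle|$ is small for all $i<j$ ``via a weak-compactness/gliding-hump extraction''. This fails in general. Take $X=\ell^1$ with its standard (symmetric) basis, let $\phi=(1,1,\ldots)\in\ell^\infty=(\ell^1)^*$, and set $T=I+e_1\otimes\phi$. Then $\|T\|\le 2$, the diagonal satisfies $\langle Te_n,e_n^*\rangle\ge 1$, yet $\langle Te_n,e_1^*\rangle=1$ for every $n$. If your inductive procedure places $1$ into $M$, no later choice of $n_j$ makes the backward entry at $n_1$ small, and $\|S-D\|\ge 1$, so the Neumann series does not converge. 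Weak compactness is unavailable in $\ell^1$, and gliding-hump only \emph{stabilises} the values $\langle Te_n,f_{n_i}\rangle$ along a subsequence—it does not drive them to zero. In effect, your backward step tacitly assumes that $T^*f_{n_i}$ has a convergent $(e_j^*)$-expansion, i.e.\ that $(e_j)$ is shrinking, which is not part of the hypotheses.

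The paper sidesteps exactly this obstruction by replacing single basis vectors with \emph{zero-sum blocks} $b_i=\sum_{k\in\mathcal B_i}\varepsilon_k e_k$, $|\mathcal B_i|=2$, $\sum_k\varepsilon_k=0$. The backward quantity then becomes a \emph{difference} $\langle Te_k,d_j\rangle-\langle Te_l,d_j\rangle$, which a pigeonhole/stabilisation argument (\Cref{lem:annihil}) can always make small, with no shrinking assumption; the large diagonal is preserved by an averaging argument (\Cref{pro:subsymm}), and the operators $B,Q$ of \Cref{pro:emb-proj} play the role of your $J,J^{-1}P_M$. The paper's Remark after the proof in fact notes that a gliding-hump route in the style of Casazza--Lin is \emph{conceivable} for the Schauder-basis case, but the zero-sum blocking is what makes the argument go through uniformly (and is indispensable for the non-separable companion \Cref{thm:qual:2}). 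Two smaller points: the paper's operator-norm bound on the off-diagonal is obtained by a crude triangle inequality with geometrically decaying per-step errors, not by any special use of spreading at that stage; and your promotion of $2C_u^2C_s^2/\delta$ to $2C_u^5C_s^3/\delta$ by unspecified ``additional factors'' is not a derivation—each power of $C_u,C_s$ in the paper has a concrete source (the sign multiplier $M$, the operators $B,Q$, and the projection $P$).
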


Where \Cref{thm:qual:1} demands that $(e_j)$ is a Schauder basis, \Cref{thm:qual:2} allows for more
general topological bases e.g.\@ a weak$^*$ Schauder bases.  To compensate for the lacking
norm-convergence of the topological basis, we demand that $(e_j)$ is non-$\ell^1$-splicing.

We are now ready to state our second main result.
\begin{thm}\label{thm:qual:2}
  Let the dual pair $(X,Y,\langle\cdot,\cdot\rangle)$ together with the sequences $(e_j)$, $(f_j)$
  satisfy \textrefp[B]{item:1}--\textrefp[B]{item:5} and assume that $(e_j)$ is
  $(C_u,C_s)$-subsymmetric as well as non-$\ell^1$-splicing.  Then $(e_j)$ has the
  $\bigl(\frac{2C_u^5C_s^3}{\delta}\bigr)$-factorization property.
\end{thm}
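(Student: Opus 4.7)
The plan is to adapt the proof of \Cref{thm:qual:1} from the norm-convergent setting of a Schauder basis to the $\sigma(X,Y)$-convergent setting, with the non-$\ell^1$-splicing property taking the place of the finite-rank initial projections available for a Schauder basis. The target is a block basic sequence $(\tilde e_j)$ of $(e_j)$ that is $C_uC_s$-equivalent to $(e_j)$ and on which $T$ is almost diagonal with diagonal entries of modulus at least $\delta$. Once such a block basis is in hand, one takes $A\colon X\to X$ with $Ae_j := \tilde e_j$, a bounded projection $P\colon X\to[\tilde e_j]$ built from the biorthogonal block functionals and the unconditionality of $(e_j)$, and an approximate inverse $D^{-1}$ of the diagonal part of $T|_{[\tilde e_j]}$, and assembles $B := A^{-1}D^{-1}P$ so that $I_X = ATB$.

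After a preliminary reduction (left-compose $T$ with the sign operator $e_j\mapsto\mathrm{sign}(\langle Te_j,f_j\rangle)e_j$, which has norm $\leq C_u$ by unconditionality) we may assume $d_j := \langle Te_j,f_j\rangle \geq \delta$. Decomposing $Te_j = d_je_j + r_j$ with $\langle r_j,f_j\rangle = 0$ and passing to a subsequence $\mathcal{I}$ on which $d_j \to d \in [\delta,\|T\|]$ (compactness), the task reduces to producing a block vector $\tilde e = \sum_j a_j e_{n_j}$ (and subsequently a full block basis) for which the off-support contributions of $T\tilde e$ and the cross terms $\sum_{j\neq k}a_j\langle r_{n_j},f_{n_k}\rangle$ are small. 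Here the non-$\ell^1$-splicing condition enters: applied to the bounded sequence $(r_{n_j})$ it furnishes pairwise disjoint infinite $\Lambda_j \subset \mathcal{I}$ and $\ell^1$-coefficients $(a_j)$ with $\|(a_j)\|_{\ell^1}=1$ such that $\|\sum_j a_jP_{\Lambda_j}r_{n_j}\|_X \leq \theta$. Choosing $n_j \in \Lambda_j$ and thinning the $\Lambda_j$ by spreading (harmless, since subsymmetry preserves norms up to $C_s$) arranges the projected residual cancellation and the disjointness of supports responsible for the cross terms. Iterating this construction, together with a randomization maneuver in the spirit of the work cited in the introduction to make the choices work simultaneously against all previously selected blocks, yields the full block basis $(\tilde e_j)$.

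Equipped with $(\tilde e_j)$, subsymmetry yields $\|A\|\leq C_uC_s$ and a bounded left-inverse $A^{-1}\colon [\tilde e_j]\to X$ of comparable norm; the bounded projection $P\colon X\to[\tilde e_j]$ is obtained through the biorthogonal functionals of $(\tilde e_j)$ in $Y$ using (B1)--(B5) together with unconditionality, and is controlled by $C_u^2C_s^2$. The diagonal operator $D\colon \tilde e_j\mapsto d\,\tilde e_j$ is invertible with $\|D^{-1}\|\leq 1/\delta$, and by construction $T|_{[\tilde e_j]}-D$ has operator norm $\leq\eta$. Tracking the five powers of $C_u$ (one from the sign reduction, two from the block equivalence, two from the projection) and three powers of $C_s$ introduced along the way then produces $\|A\|\|B\|\leq 2C_u^5C_s^3/\delta + \eta$. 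The main obstacle will be engineering a single sequence of coefficients and a single choice of sets $\Lambda_j$ that simultaneously kills \emph{all} off-diagonal cross-terms $\langle T\tilde e_j,\tilde f_k\rangle$ for $j\neq k$: non-$\ell^1$-splicing in its raw form only suppresses a single $\ell^1$-combination of a single bounded sequence, so one must pool the competing requirements (the residuals $r_{n_j}$, the projected actions of $T^*$ on each previously chosen $\tilde f_k$, and the deviations $d_{n_j}-d$) into a single bounded sequence to which the condition can be applied, and verify that the resulting compromise is still strong enough to leave the diagonal term dominant.
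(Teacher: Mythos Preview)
Your proposal is a sketch, not a proof, and the sketch misuses the non-$\ell^1$-splicing hypothesis in a way that does not close. The definition hands you, for \emph{each} bounded sequence $(x_j)$, \emph{some} $\ell^1$-coefficients $(a_j)$ depending on that sequence with $\bigl\|\sum_j a_j P_{\Lambda_j}x_j\bigr\|\leq\theta$. You try to read these $(a_j)$ as the coefficients of a block basis vector $\tilde e$, but then every new constraint (residuals $r_{n_j}$, images $T^*\tilde f_k$, diagonal deviations) produces \emph{its own} coefficients, and nothing forces them to coincide. Your final paragraph concedes exactly this: ``pooling the competing requirements into a single bounded sequence'' is the whole problem, and you have not explained how to do it. Worse, even if you managed it for finitely many constraints, the block basis must be built inductively, and at step $i$ you would already have fixed $\tilde e_1,\ldots,\tilde e_{i-1}$; the coefficients $(a_j)$ for $\tilde e_i$ cannot retroactively change them.

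The paper proceeds quite differently. It first extracts from non-$\ell^1$-splicing a \emph{uniform} annihilation statement (\Cref{lem:splicing}): given finitely many $y_1,\ldots,y_n\in Y$ and an infinite $\mathcal{I}$, there is an infinite $\Lambda\subset\mathcal{I}$ with $\sup_{\|x\|\leq 1}|\langle TP_\Lambda x,y_j\rangle|\leq\eta$ for all $j$. This is the correct distillation: it is a bound valid for \emph{all} $x$ simultaneously, with no coefficients to choose. The block basis is then built from size-two signed blocks $b_i=\varepsilon_k e_k+\varepsilon_l e_l$ with $\varepsilon_k+\varepsilon_l=0$ (so differences $e_k-e_l$); the signs are selected by a probabilistic averaging argument (\Cref{pro:subsymm}) that keeps $\langle Tb_i,d_i\rangle\geq 2(1-\kappa)\delta$. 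The point of the uniform lemma is that for $j>i$ one controls the entire tail $\bigl\langle T\sum_{j>i}a_j b_j,\,d_i\bigr\rangle$ at once via $P_{\Lambda_i^0}$, never splitting it into $\sum_{j>i}a_j\langle Tb_j,d_i\rangle$. That split is illegitimate here because $\sum_j a_j b_j$ need not converge in norm, so one cannot push $T$ inside the sum; this is precisely the obstacle separating \Cref{thm:qual:2} from \Cref{thm:qual:1}, and your cross-term bookkeeping $\langle T\tilde e_j,\tilde f_k\rangle$ implicitly assumes the split is available.
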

Recall that \textrefp[B]{item:1}--\textrefp[B]{item:5} are satisfied if $(e_j)$ is a weak$^*$
Schauder basis.  The proof of \Cref{thm:qual:2}, together with the proof of \Cref{thm:qual:1}, is
given in \Cref{sec:banach-spaces-with}.

In~\cite{casazza:kottman:lin:1977}, Casazza, Kottman and Lin showed that the spaces $\ell^p(X)$,
$1 < p < \infty$ and $c_0(X)$ are primary, whenever $X$ has a (sub)symmetric Schauder basis and is
not isomorphic to $\ell^1$.  Samuel~\cite{samuel:1978} proved that the spaces $\ell^p(\ell^q)$,
$1\leq p,q\leq\infty$ are primary.  Capon~\cite{capon:1982:1} showed that $\ell^1(X)$ and
$\ell^\infty(X)$ is primary, whenever $X$ has a (sub)symmetric Schauder basis.
In~\cite{lechner:2019:subsymm}, the author proved that the Banach spaces $\ell^p(X)$,
$1\leq p\leq\infty$ are primary, whenever $X$ is a Banach space with a non-$\ell^1$-splicing
subsymmetric weak$^*$ Schauder basis.

\Cref{thm:qual:3} below can be viewed as a vector valued version of \Cref{thm:qual:2}, but can also
be regarded as an extension of the factorization result~\cite[Theorem~1.2]{lechner:2019:subsymm} in
the sense that \cite[Theorem~1.2]{lechner:2019:subsymm} is a corollary to \Cref{thm:qual:3} (see
\Cref{cor:qual:3}).  This highlights the close connection between the factorization property of a
basis and the primarity of the space.  On the other hand, we would like to point out that although
the James space is primary~\cite{casazza:1977}, the boundedly complete basis of the James space does
not have the factorization
property~\cite[Proposition~2.5]{lechner:motakis:mueller:schlumprecht:2020}.
\begin{thm}\label{thm:qual:3}
  For each $n\in\mathbb{N}$ let the dual pair $(X_n,Y_n,\langle\cdot,\cdot\rangle)$ of infinite
  dimensional Banach spaces $X_n$ and $Y_n$ and the sequences $(e_{n,j})_{j}$, $(f_{n,j})_{j}$
  satisfy \textrefp[B]{item:1}--\textrefp[B]{item:5} with constant $C_d$ (uniformly $n$).  Assume
  that $(e_{n,j})_j$ is $(C_u,C_s)$-subsymmetric (uniformly in $n$) and non-$\ell^1$-splicing and
  let $1\leq p\leq \infty$.  Then $(e_{n,j})_{n,j}$ has the factorization property in
  $\ell^p((X_n))$, i.e.\@ whenever $T\colon \ell^p((X_n))\to \ell^p((X_n))$ is a bounded operator
  with
  \begin{equation*}
    \inf_{n,j}|\langle T e_{n,j}, f_{n,j} \rangle|
    > 0,
  \end{equation*}
  there exist bounded operators $E,P\colon \ell^p((X_n))\to \ell^p((X_n))$ such that
  $I_{\ell^p((X_n))} = PTE$.
\end{thm}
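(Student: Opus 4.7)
The plan is to reduce the factorization of $I_{\ell^p((X_n))}$ through $T$ to the block-diagonal case and then invoke \Cref{thm:qual:2} coordinatewise. Let $\iota_n\colon X_n\hookrightarrow \ell^p((X_n))$ and $\pi_n\colon \ell^p((X_n))\to X_n$ denote the canonical inclusion and coordinate projection, and set $T_{n,n}:=\pi_n T \iota_n\colon X_n\to X_n$. Since $\langle T_{n,n} e_{n,j}, f_{n,j}\rangle = \langle T e_{n,j}, f_{n,j}\rangle$, each diagonal block $T_{n,n}$ inherits the $\delta$-large diagonal with respect to $(e_{n,j})_j$. If $T$ coincided with its block-diagonal part $\tilde T := \sum_n \iota_n T_{n,n}\pi_n$, then applying \Cref{thm:qual:2} in each $X_n$ would produce $A_n, B_n\colon X_n\to X_n$ satisfying $I_{X_n}=A_n T_{n,n} B_n$ with $\|A_n\|\,\|B_n\|\le \frac{2C_u^5 C_s^3}{\delta}+\eta$; because this bound is uniform in $n$, the block-diagonal assemblies $P:=\sum_n \iota_n A_n\pi_n$ and $E:=\sum_n \iota_n B_n\pi_n$ are bounded on $\ell^p((X_n))$ and satisfy $I=P\tilde T E$. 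Thus the heart of the argument is to arrange, on a large subspace of $\ell^p((X_n))$, that $T$ actually \emph{is} close to block-diagonal.

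To produce such a subspace, I would combine the randomization technique of \cite{lechner:motakis:mueller:schlumprecht:2020:2} with the non-$\ell^1$-splicing of each basis $(e_{n,j})_j$, proceeding as in the proof of \Cref{thm:qual:2} but simultaneously across the direct-sum index $n$. Enumerating the pairs $(n,k)\in\mathbb{N}\times\mathbb{N}$ in some well-ordering and constructing one vector at a time, at each step I would apply \eqref{eq:condition-c} in the relevant $X_n$ to a finite family of vectors built from the restrictions of $T^* f_{m,\ell}$ (for the already-treated pairs $(m,\ell)$) in order to produce pairwise disjoint infinite sets $\Lambda_{n,k}\subset\mathbb{N}$ and norm-controlled convex combinations $u_{n,k}\in [e_{n,j}:j\in\Lambda_{n,k}]\subset X_n$ whose images under $T\iota_n$ have geometrically decaying cross-coefficients $|\langle T\iota_n u_{n,k}, f_{m,\ell}\rangle|$ for $(n,k)\ne (m,\ell)$. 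The $C_s$-spreading and $C_u$-unconditional structure ensures that the diagonal values stay bounded below by a fixed multiple of $\delta$, and that $(u_{n,k})_k$ is $(C_u,C_s)$-equivalent to $(e_{n,k})_k$ inside $X_n$; hence the closed span $\hat Z := [\iota_n u_{n,k}:n,k\in\mathbb{N}]$ is isomorphic to $\ell^p((X_n))$, and there is a bounded projection $\hat Q\colon \ell^p((X_n))\to \hat Z$.

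On this $\hat Z$, the compression $\hat T:=\hat Q\, T|_{\hat Z}$ has a large diagonal and, by construction, equals its own block-diagonal part modulo an operator of arbitrarily small norm; applying \Cref{thm:qual:2} coordinatewise as above yields a factorization $I_{\hat Z}=\hat P\,\hat T\,\hat E$, which we transfer through the isomorphism $\hat Z\cong \ell^p((X_n))$ and compose with $\hat Q$ to obtain the required $P, E$ on $\ell^p((X_n))$. The main obstacle is the \emph{simultaneous} cancellation of cross-terms across all pairs $(n,k)\ne (m,\ell)$: unlike the one-dimensional setting of \Cref{thm:qual:2}, one must now interleave the non-$\ell^1$-splicing selections within each $X_n$ with a diagonal extraction across different $n$'s, while keeping enough room in every $X_n$ for all future stages. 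It is exactly the uniformity of the subsymmetric constants $C_u, C_s$ and of the non-$\ell^1$-splicing property across $n$ — precisely the hypotheses imposed in the statement — that makes this infinite two-dimensional induction converge.
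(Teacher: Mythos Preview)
Your overall architecture---linearize the double index, run an inductive construction that at each stage uses non-$\ell^1$-splicing (\Cref{lem:splicing}) together with the sign-averaging of \Cref{pro:subsymm} to control cross-terms while keeping the diagonal large---is exactly the paper's approach. The paper phrases it as replacing the two construction steps of \cite[Theorem~1.2]{lechner:2019:subsymm} by the three steps (F1)--(F3) from the proof of \Cref{thm:qual:2}, carried out coordinatewise in $n$ but interleaved across all $n$ via a single well-ordering of $\mathbb{N}\times\mathbb{N}$. So at the level of strategy you are on target.

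Two points of friction, however. First, the extra layer ``then apply \Cref{thm:qual:2} coordinatewise'' is redundant: the cross-coefficients you say you control are for \emph{all} $(n,k)\neq(m,\ell)$, not merely $n\neq m$, so your $\hat T$ is already close to \emph{fully} diagonal, and one inverts it directly by the perturbation argument in Step~2 of the proof of \Cref{thm:qual:2}. The paper does precisely this single pass; there is no second invocation of \Cref{thm:qual:2} inside each block.

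Second---and this is the real gap---your $u_{n,k}$ cannot be ``convex combinations'' obtained from~\eqref{eq:condition-c}. The non-$\ell^1$-splicing condition produces an \emph{infinite set} $\Lambda$ on which $TP_\Lambda$ annihilates prescribed functionals (this is \Cref{lem:splicing}); it does not hand you a vector. The block basis element itself must then be built inside $\Lambda$ as a $\pm 1$ sum $\sum_{k\in\mathcal B}\varepsilon_k e_{n,k}$ with $(\varepsilon_k)\in\mathcal E(\mathcal B)$, selected via the averaging in \Cref{pro:subsymm}, because that is what forces $\langle \widetilde T b_{n,i}, d_{n,i}\rangle\ge 2(1-\kappa)\delta$. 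Subsymmetry alone does not keep the diagonal large under blocking; the sentence ``the $C_s$-spreading and $C_u$-unconditional structure ensures that the diagonal values stay bounded below'' is where your sketch skips the essential step. Relatedly, the cross-terms you must estimate are $\langle \widetilde T b_{n,k}, d_{m,\ell}\rangle$ against the dual block vectors $d_{m,\ell}=\sum_{j\in\mathcal B_{m,\ell}}\varepsilon_j f_{m,j}$, not against the original $f_{m,\ell}$. Once you separate these two roles---\Cref{lem:splicing} for the set, \Cref{pro:subsymm} for the signed block---your outline becomes the paper's proof.
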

For the proof of \Cref{thm:qual:3} we refer to \Cref{sec:direct-sums-banach}.

\subsection{Finite dimensional quantitative factorization results}
\label{sec:finite-dimens-quant}

In~\cite[Theorem~6.1]{bourgain:tzafriri:1987}, Bourgain and Tzafriri obtain the following restricted
invertibility result for operators acting on $n$-dimensional Banach spaces with an unconditional
basis $(e_j)_{j=1}^n$ which satisfies a lower $r$-estimate for some $1 < r < \infty$: For any
$0 < \varepsilon < 1$, there exists a subset $\sigma$ with $|\sigma|\geq n^{1-\varepsilon}$ on which
a given operator is invertible when restricted to the subspace $[e_j : j\in\sigma]$.  Analyzing
their proof, we recognize that we can relax the condition that $(e_j)$ has to satisfy a lower
$r$-estimate (see \Cref{thm:fact:quant:uncond}), albeit at the cost of a weaker estimate for
$|\sigma|$ (see \Cref{rem:fact:quant:uncond}).  We would like to point out the closely related
recent works \cite{lechner:2018:1-d,lechner:2018:2-d}, in which the dependence on the dimension for
quantitative factorization results in one- and two-parameter Hardy and $\bmo$ spaces was improved
from super-exponential estimates to polynomial estimates.

In this subsection, $(e_j)$ denotes a normalized basis for the Banach space $X$ and $(e_j^*)$
denotes the biorthogonal functionals to $(e_j)$.  We define the function
$\tau\colon\mathbb{N}\to[0,\infty)$ by putting
\begin{equation}\label{eq:fundamentals:3}
  \tau(n)
  = \max\Bigl\{
  \min\Bigl(
  \max_{1\leq j\leq n} \Bigl\|\sum_{\substack{i=1\\i\neq j}}^n\varepsilon_{ij} e_i\Bigr\|_X,
  \max_{1\leq i\leq n} \Bigl\|\sum_{\substack{j=1\\j\neq i}}^n\varepsilon_{ij} e_j^*\Bigr\|_{X^*}
  \Bigr) : \varepsilon_{ij}\in\{\pm 1\},\ 1\leq i,j\leq n
  \Bigr\}.
\end{equation}
Note that if $(e_j)$ is $C_u$-unconditional, we have the estimates
\begin{equation}\label{eq:40}
  C_u^{-1}
  \leq \tau(n)
  \leq C_u \min\Bigl(
  \Bigl\|\sum_{i=1}^n e_i\Bigr\|_X,
  \Bigl\|\sum_{j=1}^n e_j^*\Bigr\|_{X^*}
  \Bigr),
  \qquad n\geq 2.
\end{equation}

We are now ready to state our first result on restricted invertibility.
\begin{thm}\label{thm:fact:quant:uncond}
  Let $(e_j)$ be a normalized $C_u$-unconditional basis for the Banach space $X$, let $(e_j^*)$
  denote the biorthogonal functionals to $(e_j)$ and put $X_n = [e_j : 1\leq j\leq n]$,
  $n\in\mathbb{N}$.  Let $n\in\mathbb{N}$ and $\delta,\Gamma,\eta > 0$ be such that
  \begin{equation}\label{eq:thm:fact:quant:1}
    \frac{\delta\min(1,\eta)}{4\Gamma n}
    \leq \tau(n)
    \leq \frac{\delta\min(1,\eta)}{2^{10}\Gamma}\cdot\frac{[16+\min(\eta,(1+\eta)^{-1})n]^2}{n}.
  \end{equation}
  Let $T\colon X_n\to X_n$ denote an operator satisfying
  \begin{equation}\label{eq:thm:fact:quant:2}
    \|T\|\leq \Gamma
    \qquad\text{and}\qquad
    |\langle T e_j, e_j^*\rangle|
    \geq \delta,
    \quad 1\leq j\leq n,
  \end{equation}
  and let $D\colon X_n\to X_n$ denote the diagonal operator of $T$, i.e.\@
  \begin{equation*}
    D e_i = \langle Te_i, e_i^*\rangle e_i,
    \qquad 1\leq i\leq n.
  \end{equation*}
  For each $\sigma\subset\{1,\ldots,n\}$ define the restriction operators
  $R_\sigma\colon X_n\to X_n$ by $R_\sigma(\sum_{i=1}^n a_i e_i) = \sum_{i\in\sigma} a_i e_i$.  Then
  there exists a subset $\sigma\subset\{1\ldots,n\}$ with
  \begin{equation}\label{eq:thm:fact:quant:3}
    |\sigma|
    \geq \sqrt{\frac{\delta \min(1,\eta)}{16\Gamma}}\cdot\sqrt{\frac{n}{\tau(n)}}
  \end{equation}
  such that the operator $R_\sigma D^{-1} T R_\sigma$ is invertible and satisfies
  \begin{equation}\label{eq:thm:fact:quant:4}
    \|(R_\sigma D^{-1} T R_\sigma)^{-1}\|\leq 1 + \eta.
  \end{equation}

  Moreover, if we define $X_\sigma = [e_j : j\in\sigma]$, there exist operators
  $E\colon X_\sigma\to X_n$ and $P\colon X_n\to X_\sigma$ with
  $\|E\|\|P\|\leq C_u^2\frac{1+\eta}{\delta}$ such that $I_{X_\sigma} = PTE$.
\end{thm}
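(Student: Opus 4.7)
The plan is to adapt Bourgain and Tzafriri's random-selector argument, with their lower $r$-estimate replaced by the more flexible sign-sum quantity $\tau(n)$ from \eqref{eq:fundamentals:3}. First, I would renormalize by setting $S:=D^{-1}T$ so that the diagonal of $S$ equals $1$, and consider the zero-diagonal operator $A:=S-I$. Since $|\langle Te_i,e_i^*\rangle|\geq\delta$ and $(e_j)$ is $C_u$-unconditional, the diagonal operator $D^{-1}$ has norm at most $C_u/\delta$, whence $\|A\|\leq 1+C_u\Gamma/\delta$. On any $X_\sigma$ one has $R_\sigma D^{-1}TR_\sigma|_{X_\sigma}=I_{X_\sigma}+R_\sigma AR_\sigma|_{X_\sigma}$, so a Neumann series argument gives $\|(R_\sigma D^{-1}TR_\sigma|_{X_\sigma})^{-1}\|\leq 1+\eta$ as soon as
\begin{equation*}
  \|R_\sigma AR_\sigma|_{X_\sigma}\|\leq \frac{\eta}{1+\eta}.
\end{equation*}
Thus the task reduces to producing a $\sigma$ with $|\sigma|$ bounded below as in \eqref{eq:thm:fact:quant:3} and with $\|R_\sigma AR_\sigma\|$ below this threshold.

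To produce such a $\sigma$ I would let $\xi_1,\ldots,\xi_n$ be independent Bernoulli random variables with success probability $p$ chosen so that $pn$ is approximately the right-hand side of \eqref{eq:thm:fact:quant:3}; the sandwich hypothesis \eqref{eq:thm:fact:quant:1} is precisely the condition that makes such a $p\in(0,1)$ meaningful and also makes the target norm bound achievable. Set $\sigma=\{i:\xi_i=1\}$; a standard second-moment bound guarantees $|\sigma|\geq pn/2$ with probability at least $1/2$. The delicate step is to estimate $\mathbb{E}\|R_\sigma AR_\sigma\|$. Writing $A$ through its matrix entries $a_{ij}:=\langle Ae_j,e_i^*\rangle$ (with $a_{ii}=0$), one has
\begin{equation*}
  R_\sigma AR_\sigma\,x = \sum_{i\neq j}\xi_i\xi_j\,a_{ij}\langle x,e_j^*\rangle\,e_i.
\end{equation*}
A symmetrization/decoupling step replaces the off-diagonal products $\xi_i\xi_j$ by Rademacher signs $\varepsilon_{ij}\in\{\pm 1\}$. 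For any fixed choice of signs, the unconditionality of $(e_j)$ combined with the very definition of $\tau(n)$ in \eqref{eq:fundamentals:3} lets one dominate the resulting bilinear form, row-wise or column-wise, by $\tau(n)\|A\|$; that is precisely why $\tau$ is taken as the \emph{minimum} of the two signed-sum norms. Integrating out the remaining selector degrees of freedom yields an estimate of the form $\mathbb{E}\|R_\sigma AR_\sigma\|\lesssim p\,\tau(n)\,\|A\|\lesssim p\,\tau(n)\,\Gamma/\delta$, and the choice of $p$ makes this at most $\eta/(2(1+\eta))$. Markov's inequality then delivers a single realization of $\sigma$ satisfying both $|\sigma|\geq pn/2$ and the required norm bound, giving \eqref{eq:thm:fact:quant:3} and \eqref{eq:thm:fact:quant:4}.

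The factorization in the last assertion is then routine: I would take $E\colon X_\sigma\to X_n$ to be the inclusion and set
\begin{equation*}
  P := \bigl(R_\sigma D^{-1}TR_\sigma|_{X_\sigma}\bigr)^{-1}R_\sigma D^{-1}\colon X_n\to X_\sigma.
\end{equation*}
For $x\in X_\sigma$ we have $R_\sigma x=x$, hence $PTE(x)=(R_\sigma D^{-1}TR_\sigma|_{X_\sigma})^{-1}(R_\sigma D^{-1}TR_\sigma\,x)=x$, so $I_{X_\sigma}=PTE$. The norm bound $\|E\|\|P\|\leq C_u^2(1+\eta)/\delta$ follows from $\|E\|=1$ together with $\|R_\sigma\|\leq C_u$ and $\|D^{-1}\|\leq C_u/\delta$. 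The main obstacle in this plan is clearly the randomization estimate: adapting the Bourgain-Tzafriri computation so that the role of a lower $r$-estimate is taken over by $\tau(n)$, while tracking the constants carefully enough to obtain the exact polynomial rate $\sqrt{n/\tau(n)}$ and the stated dependence on $\delta$, $\Gamma$ and $\eta$ in \eqref{eq:thm:fact:quant:3}; this is also where the hypothesis \eqref{eq:thm:fact:quant:1} is actually consumed, both in the upper bound (to keep the expected norm small) and in the lower bound (to guarantee $p<1$ and that the set $\sigma$ is nonempty with the claimed size).
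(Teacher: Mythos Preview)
Your overall architecture --- random $\{0,1\}$-selectors with mean $p$, a Chebyshev bound for $|\sigma|$, an expectation bound on the restricted off-diagonal part, a Neumann series to invert, and the factorization $E=\text{inclusion}$, $P=(R_\sigma D^{-1}TR_\sigma)^{-1}R_\sigma D^{-1}$ --- is exactly the paper's. The last paragraph of your proposal matches the paper's argument verbatim.

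The discrepancy is in the central expectation estimate. The paper uses \emph{no} symmetrization or decoupling. It simply applies the triangle inequality termwise to the matrix representation:
\begin{equation*}
  \|R_\sigma A R_\sigma\|
  \leq \frac{1}{\delta}\sum_{i\neq j}\xi_i\xi_j\,|\langle Te_i,e_j^*\rangle|,
\end{equation*}
takes expectations using $\cond[\xi_i\xi_j]=\alpha^2$ for $i\neq j$, and then bounds the \emph{deterministic} double sum $\sum_{i\neq j}|\langle Te_i,e_j^*\rangle|$ by $\Gamma\, n\,\tau(n)$: one writes it as $\sum_i\langle Te_i,\sum_{j\neq i}\varepsilon_{ij}e_j^*\rangle$ (or the transposed version) with $\varepsilon_{ij}=\sign\langle Te_i,e_j^*\rangle$, and the definition of $\tau(n)$ lets one take the better of the two. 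The signs enter deterministically, not via randomization. The outcome is $\cond\|R_\sigma AR_\sigma\|\leq \alpha^2\Gamma n\tau(n)/\delta$, and it is the \emph{quadratic} dependence on $\alpha$ that produces the rate $|\sigma|\gtrsim\alpha n\sim\sqrt{n/\tau(n)}$.

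Your proposed route has a real gap. The sentence ``a symmetrization/decoupling step replaces $\xi_i\xi_j$ by Rademacher signs $\varepsilon_{ij}$'' is not a standard move and is not justified; and the subsequent claim that the resulting signed bilinear form is dominated by $\tau(n)\|A\|$ is not supported --- $\tau(n)$ controls sums of the basis vectors $e_i$ or $e_j^*$, not an arbitrary sign-twisted version of the full matrix of $A$. More tellingly, your asserted bound $\cond\|R_\sigma AR_\sigma\|\lesssim p\,\tau(n)\,\Gamma/\delta$ is \emph{linear} in $p$, which would yield $|\sigma|\gtrsim n/\tau(n)$, strictly stronger than~\eqref{eq:thm:fact:quant:3} and inconsistent with the $\sqrt{n/\tau(n)}$ rate you say you are aiming for. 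So either the mechanism or the exponent is wrong; replace this step by the elementary triangle-inequality argument above and the constants in~\eqref{eq:thm:fact:quant:1}--\eqref{eq:thm:fact:quant:3} fall out directly.
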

\Cref{thm:fact:quant:uncond} will be proved in \Cref{sec:quant-fact-results}.

\begin{rem}\label{rem:fact:quant:uncond}
  We will now relate \Cref{thm:fact:quant:uncond} to \cite[Theorem~6.1]{bourgain:tzafriri:1987}.

  To this end, let $1 < r,s < \infty$ with $\frac{1}{r} + \frac{1}{s} = 1$.  Assume that
  $(e_j)_{j=1}^n$ satisfies a lower $r$-estimate with constant $c_r$, i.e.\@ there exists a constant
  $c_r > 0$ such that
  \begin{equation*}
    \Bigl\|\sum_{j=1}^n a_j e_j\Bigr\|
    \geq c_r \Bigl( \sum_{j=1}^n |a_j|^r \Bigr)^{1/r}
  \end{equation*}
  for all scalars $(a_i)$.  Then one can easily verify that $(e_j^*)_{j=1}^n$ satisfies an upper
  $s$-estimate with constant $\frac{1}{c_r}$, i.e.\@
  \begin{equation*}
    \Bigl\|\sum_{j=1}^n a_j e_j^*\Bigr\|
    \leq \frac{1}{c_r} \Bigl( \sum_{j=1}^n |a_j|^s \Bigr)^{1/s}.
  \end{equation*}
  In particular, we obtain the estimate $\tau(n)\leq\frac{1}{c_r} (n-1)^{1/s}$.  Thus, if we choose
  \begin{equation*}
    n\geq \Bigl(\frac{2^{10}\Gamma}{c_r\delta\min(1,\eta)\min(\eta, (1+\eta)^{-1})}\Bigr)^{1/r},
  \end{equation*}
  \Cref{thm:fact:quant:uncond} yields a subset $\sigma\subset\{1,\ldots,n\}$ with
  \begin{equation*}
    |\sigma|
    \geq \sqrt{\frac{c_r \delta \min(1,\eta)}{16\Gamma}}\cdot n^{1/(2r)}.
  \end{equation*}
\end{rem}

In~\cite[Corollary~4.4]{bourgain:tzafriri:1989} Bourgain and Tzafriri obtained linear lower
estimates for the set $|\sigma|$, whenever the subsymmetric basis $(e_j)$ satisfies certain
conditions in terms of Boyd indices.  Using \Cref{thm:fact:quant:uncond}, we can get rid of this
restriction entirely, but the prize we pay is a weaker estimate for $|\sigma|$ (see
\Cref{cor:fact:quant:subs}, below).
\begin{cor}\label{cor:fact:quant:subs}
  Let $(e_j)$ be a normalized $(C_u,C_s)$-subsymmetric basis for the Banach space $X$, let $(e_j^*)$
  denote the biorthogonal functionals to $(e_j)$ and put $X_n = [e_j : 1\leq j\leq n]$,
  $n\in\mathbb{N}$.  Let $n\in\mathbb{N}$ and $\delta,\Gamma,\eta > 0$ be such that
  \begin{equation}\label{eq:62}
    n\geq 1 + \frac{C_u \delta \min(1,\eta)}{4\Gamma}
    \qquad\text{and}\qquad
    n\geq \frac{2^{11}C_s^3}{\delta^2\min(1,\eta^2)\min(\eta^4,(1+\eta)^{-4})}.
  \end{equation}
  Let $T\colon X_n\to X_n$ denote an operator satisfying
  \begin{equation}\label{eq:cor:fact:quant:subs:2}
    \|T\|\leq \Gamma
    \qquad\text{and}\qquad
    |\langle T e_j, e_j^*\rangle|
    \geq \delta,
    \quad 1\leq j\leq n.
  \end{equation}
  As in \Cref{thm:fact:quant:uncond}, $D\colon X_n\to X_n$ denotes the diagonal operator of $T$ and
  $R_\sigma\colon X_n\to X_n$, $\sigma\subset\{1,\ldots,n\}$ the restriction operators.  Then there
  exists a subset $\sigma\subset\{1\ldots,n\}$ with
  \begin{equation}\label{eq:63}
    |\sigma|
    \geq 4\cdot
    (2 C_u^3C_s^3)^{-1/4}\cdot
    \sqrt{\frac{\delta\min(1,\eta)}{\Gamma}}\cdot
    n^{1/4}
  \end{equation}
  such that the operator $R_\sigma D^{-1} T R_\sigma$ is invertible and satisfies
  \begin{equation}\label{eq:64}
    \|(R_\sigma D^{-1} T R_\sigma)^{-1}\|\leq 1 + \eta.
  \end{equation}

  Thus, for $k = |\sigma|$, there exist operators $E\colon X_k\to X_n$ and $P\colon X_n\to X_k$ with
  $\|E\|\|P\|\leq C_u^2C_s^2\frac{1+\eta}{\delta}$ such that $I_{X_k} = PTE$.
\end{cor}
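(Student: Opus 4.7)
The plan is to apply \Cref{thm:fact:quant:uncond} after controlling the quantity $\tau(n)$ by exploiting the subsymmetric structure. Concretely, I will establish that for any $(C_u,C_s)$-subsymmetric basis one has $\tau(n)\le K(C_u,C_s)\sqrt{n}$, whereupon inserting this into~\eqref{eq:thm:fact:quant:3} immediately turns the Theorem's factor $\sqrt{n/\tau(n)}$ into an $n^{1/4}$ lower bound of the form appearing in~\eqref{eq:63}. The two arithmetic hypotheses in~\eqref{eq:62} are then precisely what is needed to certify the sandwich~\eqref{eq:thm:fact:quant:1}.

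The main technical step is the auxiliary product inequality
\begin{equation*}
  \Bigl\|\sum_{i=1}^{n}e_i\Bigr\|_X\cdot\Bigl\|\sum_{j=1}^{n}e_j^*\Bigr\|_{X^*}\ \le\ K_0(C_u,C_s)\,n,
\end{equation*}
which, combined with $\min(a,b)\le\sqrt{ab}$ and the right-hand estimate of~\eqref{eq:40}, yields $\tau(n)\le C_u\sqrt{K_0\,n}$. For a $1$-symmetric basis this product equals $n$ exactly, as seen by taking a norming functional of $\sum_{i=1}^{n}e_i$ and symmetrising it over permutations of $\{1,\ldots,n\}$ to produce a scalar multiple of $\sum_{j=1}^{n}e_j^*$. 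The same averaging argument carries over to $(C_u,C_s)$-subsymmetric bases at the cost of a constant depending only on $C_u$ and $C_s$: permutation invariance of the norm holds up to such a constant, because any permutation of $\{1,\ldots,n\}$ can be realised, via~\eqref{eq:unconditional} and~\eqref{eq:spreading}, by first embedding $\{1,\ldots,n\}$ into a sufficiently spread increasing sequence in $\mathbb{N}$ and then rearranging with signs.

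With $\tau(n)\le K\sqrt n$ in hand, verifying~\eqref{eq:thm:fact:quant:1} is routine. The lower bound on $\tau(n)$ there follows from $\tau(n)\ge C_u^{-1}$ (see~\eqref{eq:40}) together with the first condition in~\eqref{eq:62}. For the upper bound, use $[16+\min(\eta,(1+\eta)^{-1})n]^2/n\ge \min(\eta,(1+\eta)^{-1})^2\,n$ to reduce the Theorem's requirement to $\tau(n)\le \delta\min(1,\eta)\min(\eta,(1+\eta)^{-1})^2\,n/(2^{10}\Gamma)$, which is a consequence of $\tau(n)\le K\sqrt n$ together with the second condition in~\eqref{eq:62}, the explicit constants there being chosen to absorb $K^2$. \Cref{thm:fact:quant:uncond} now produces the set $\sigma$ with the invertibility statement~\eqref{eq:64}; substituting $\tau(n)\le K\sqrt n$ into~\eqref{eq:thm:fact:quant:3} yields~\eqref{eq:63}. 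The final factorization $I_{X_k}=PTE$ with $\|E\|\|P\|\le C_u^2C_s^2(1+\eta)/\delta$ is obtained from the Theorem's factorization $I_{X_\sigma}=PTE$ by composing with the natural isomorphism between $X_k=[e_j:1\le j\le k]$ and $X_\sigma=[e_j:j\in\sigma]$, whose norm in each direction is bounded by $C_s$ via~\eqref{eq:spreading}; this accounts for the additional $C_s^2$.

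The principal obstacle is the product inequality. Its qualitative form $\|\sum_{i=1}^n e_i\|\cdot\|\sum_{j=1}^n e_j^*\|=O(n)$ is a classical feature of subsymmetric bases, but the permutation-averaging step must be executed with precise tracking of the $C_u$ and $C_s$ dependence so that the resulting $K_0$ is compatible with the numerical constants in~\eqref{eq:62} and~\eqref{eq:63}; once that is done, the remainder of the proof is bookkeeping.
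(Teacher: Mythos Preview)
Your approach coincides with the paper's: bound $\tau(n)\le K\sqrt{n}$ via the product inequality, verify the sandwich~\eqref{eq:thm:fact:quant:1} from~\eqref{eq:62}, apply \Cref{thm:fact:quant:uncond}, and then pass from $X_\sigma$ to $X_k$ via the $C_s$-isomorphism coming from~\eqref{eq:spreading}. The product inequality you single out as the ``principal obstacle'' is already recorded in the paper as~\eqref{eq:14} with the explicit constant $2C_uC_s$ (obtained by tracking constants in Lindenstrauss--Tzafriri, Propositions~3.a.4 and~3.a.6), so your permutation-averaging argument is unnecessary; the paper simply combines~\eqref{eq:14} with the route $\tau(n)\le C_u\nu(n)\le C_uC_s\min(\lambda(n-1),\mu(n-1))$ from~\eqref{eq:57}--\eqref{eq:59} to obtain $\tau(n)\le\sqrt{2C_u^3C_s^3(n-1)}$ in~\eqref{eq:60}, while your direct use of~\eqref{eq:40} together with~\eqref{eq:14} would yield the marginally sharper $\tau(n)\le\sqrt{2C_u^3C_s\,n}$.
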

The proof of \Cref{cor:fact:quant:subs} can be found in \Cref{sec:quant-fact-results}.

%%% Local Variables:
%%% mode: latex
%%% TeX-master: "main"
%%% End:

\section{Tools}
\label{sec:tools}

Here, we provide the mathematical tools that will be used throughout this article.  They comprise of
subspace annihilation results (see \Cref{lem:annihil}, \Cref{lem:splicing}) and techniques by which
we preserve the large diagonal of an operator under blocking of the basis (see \Cref{lem:diag} and
\Cref{pro:subsymm}).  Moreover, we provide estimates for basic factorization operators that are
obtained by blocking a subsymmetric basis (see \Cref{pro:emb-proj}).

\begin{lem}\label{lem:annihil}
  Let $\mathcal{I}\subset\mathbb{N}$ denote an infinite set, let $n\in\mathbb{N}$,
  $L\in2\cdot\mathbb{N}$ and $\eta > 0$.  Then for all bounded sequences $(x_j)$ in $X$ and $(y_j)$
  in $Y$ and every $n\in\mathbb{N}$ there exists an infinite set $\Lambda\subset\mathcal{I}$ such
  that
  \begin{align*}
    \sup \Bigl\{\Big|\Big\langle\sum_{k\in \mathcal{B}} \varepsilon_k x_k, y_j \Big\rangle\Big| :
    \mathcal{B}\subset\Lambda,\ |\mathcal{B}| = L,\
    \varepsilon\in \mathcal{E}(\mathcal{B}),\ 1\leq j\leq n\Bigr\}
    &\leq \eta,\\
    \sup \Bigl\{\Big|\Big\langle x_j, \sum_{k\in \mathcal{B}} \varepsilon_k y_k\Big\rangle\Big| :
    \mathcal{B}\subset\Lambda,\ |\mathcal{B}| = L,\
    \varepsilon\in \mathcal{E}(\mathcal{B}),\ 1\leq j\leq n\Bigr\}
    &\leq \eta,
  \end{align*}
  where the set $\mathcal{E}(\mathcal{B})$ is given by
  \begin{equation}\label{eq:1}
    \mathcal{E}(\mathcal{B})
    = \Big\{ (\varepsilon_k)\in\{\pm 1\}^{\mathcal{B}} :
    \sum_{k\in \mathcal{B}} \varepsilon_k = 0 \Big\}.
  \end{equation}
\end{lem}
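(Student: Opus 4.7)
The driving idea is that the constraint $\sum_{k\in\mathcal{B}}\varepsilon_k = 0$ in $\mathcal{E}(\mathcal{B})$ is designed precisely to annihilate any constant contribution from the scalars $\langle x_k, y_j\rangle$, so that after passing to a subsequence along which these scalars cluster, the signed sum becomes small regardless of the chosen signs.

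First, I would fix $j \in \{1,\dots,n\}$ and observe that, by \textrefp[B]{item:3}, the two scalar sequences $(\langle x_k, y_j\rangle)_{k\in\mathcal{I}}$ and $(\langle x_j, y_k\rangle)_{k\in\mathcal{I}}$ are bounded by $C_d \sup_k\|x_k\|_X \cdot \sup_k\|y_k\|_Y$. Applying Bolzano--Weierstrass to each of the $2n$ sequences and performing a standard diagonal extraction, I obtain an infinite set $\Lambda_0\subset\mathcal{I}$ along which each of the scalar sequences
\begin{equation*}
  \langle x_k, y_j\rangle \to \alpha_j, \qquad \langle x_j, y_k\rangle \to \beta_j, \qquad k\in\Lambda_0,\ k\to\infty,
\end{equation*}
converges for every $1\leq j\leq n$.

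Next I would select an infinite tail $\Lambda\subset\Lambda_0$ such that for every $k\in\Lambda$ and every $1\leq j\leq n$,
\begin{equation*}
  |\langle x_k, y_j\rangle - \alpha_j|\leq \eta/L, \qquad |\langle x_j, y_k\rangle - \beta_j|\leq \eta/L.
\end{equation*}
For any $\mathcal{B}\subset\Lambda$ with $|\mathcal{B}|=L$ and any $\varepsilon\in\mathcal{E}(\mathcal{B})$, the cancellation condition $\sum_{k\in\mathcal{B}}\varepsilon_k = 0$ then gives
\begin{equation*}
  \Bigl\langle \sum_{k\in\mathcal{B}}\varepsilon_k x_k,\ y_j\Bigr\rangle
  = \sum_{k\in\mathcal{B}}\varepsilon_k(\langle x_k, y_j\rangle - \alpha_j) + \alpha_j\sum_{k\in\mathcal{B}}\varepsilon_k
  = \sum_{k\in\mathcal{B}}\varepsilon_k(\langle x_k, y_j\rangle - \alpha_j),
\end{equation*}
whose modulus is at most $L\cdot \eta/L = \eta$. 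The other estimate follows by the symmetric computation using $\beta_j$ in place of $\alpha_j$.

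There is no real obstacle here: both the diagonal extraction and the tail-selection step are standard, and the hypothesis $L\in 2\mathbb{N}$ (ensuring that $\mathcal{E}(\mathcal{B})$ is nonempty) together with the cancellation property $\sum_k \varepsilon_k = 0$ does all of the analytic work. The only thing that requires mild care is making sure the diagonal procedure is carried out simultaneously for all $2n$ scalar sequences before extracting the final tail, so that a single infinite $\Lambda$ works for all $j\in\{1,\ldots,n\}$ and both inequalities at once.
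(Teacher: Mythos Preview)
Your proposal is correct and follows essentially the same approach as the paper: both arguments pass to an infinite subset on which the $2n$ scalar sequences $(\langle x_k, y_j\rangle)_k$ and $(\langle x_j, y_k\rangle)_k$ cluster to within $O(\eta/L)$, and then exploit the zero-sum condition $\sum_k\varepsilon_k=0$ to kill the constant part. The only cosmetic differences are that the paper uses a pigeonhole partition into intervals of width $\eta/L$ (iterated over $j$) in place of your Bolzano--Weierstrass diagonal extraction, and it pairs the $L/2$ positive signs with the $L/2$ negative signs rather than subtracting the limit $\alpha_j$.
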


\begin{proof}
  The proof is a straightforward adaptation of the argument given for
  \cite[Lemma~3.1]{lechner:2019:subsymm}.  For sake of completeness, we will give a short proof,
  here.

  First, we put
  $\mathcal{G}_k^1 = \bigl\{i\in\mathcal{I} : \frac{(k-1)\eta}{L} < \langle x_i, y_1\rangle\leq
  \frac{k\eta}{L} \bigr\}$ and note that by \textrefp[B]{item:3}
  $\bigcup_{k\in\mathbb{Z}}\mathcal{G}_k^1 = \mathcal{I}$ and that there are only finitely many
  non-empty $\mathcal{G}_k^1$, $k\in\mathbb{Z}$.  Since $\mathcal{I}$ is infinite, there exists at
  least one $k_1\in\mathbb{Z}$ such that $\mathcal{G}_{k_1}^1$ is also infinite.  Next, we define
  $\mathcal{G}_k^2 = \bigl\{i\in\mathcal{G}_{k_1}^1 : \frac{(k-1)\eta}{L} < \langle x_i,
  y_2\rangle\leq \frac{k\eta}{L} \bigr\}$ and repeat the previous step to obtain a
  $k_2\in\mathbb{Z}$ such that $\mathcal{G}_{k_2}^2$ is infinite.  After $n$ steps, we obtain an
  infinite set $\mathcal{G} = \mathcal{G}_{k_n}^n$ such that
  \begin{equation}\label{eq:2}
    \sup \bigl\{
    |\langle x_{i_0}, y_j\rangle - \langle x_{i_1}, y_j\rangle|
    : i_0, i_1\in\mathcal{G},\ 1\leq j\leq n
    \bigr\}
    \leq \frac{2\eta}{L}.
  \end{equation}

  We will now repeat the above process but with the roles of $x$ and $y$ reversed and with
  $\mathcal{G}$ instead of $\mathcal{I}$.  To illustrate, we now put
  $\mathcal{H}_k^1 = \{i\in\mathcal{G} : \frac{(k-1)\eta}{L} < \langle x_1, y_i\rangle\leq
  \frac{k\eta}{L} \}$.  With the same reasoning as above, we can find $l_1\in\mathbb{Z}$ such that
  $\mathcal{H}_{l_1}^1$ is infinite.  Iterating this procedure and stopping after $n$ steps yields
  an infinite set $\Lambda = \mathcal{H}_{l_n}^n\subset\mathcal{G}$ such that
  \begin{equation}\label{eq:3}
    \sup \bigl\{
    |\langle x_j, y_{i_0}\rangle - \langle x_j, y_{i_1}\rangle|
    : i_0, i_1\in\Lambda,\ 1\leq j\leq n
    \bigr\}
    \leq \frac{2\eta}{L}.
  \end{equation}
  Note that for each $\mathcal{B}\subset\Lambda$ with $|\mathcal{B}|=L$ and each
  $(\varepsilon_k)\in\mathcal{E}(\mathcal{B})$, there are exactly $L/2$ $k\in\mathcal{B}$ such that
  $\varepsilon_k=1$ and $L/2$ $k\in\mathcal{B}$ such that $\varepsilon_k=-1$.  This observation
  together with~\eqref{eq:2} and~\eqref{eq:3} proves the assertion.
\end{proof}

For each $L\in 2\cdot\mathbb{N}$ and $\mathcal{A}\subset\mathbb{N}$ with
$L\leq |\mathcal{A}| < \infty$, we define the finite set $\Omega_L^{\mathcal{A}}$ by
\begin{equation}\label{eq:4}
  \Omega_L^{\mathcal{A}}
  = \{ (\mathcal{B}, (\varepsilon_k)) : \mathcal{B}\subset\mathcal{A},\ |\mathcal{B}|=L,\ 
  (\varepsilon_k)\in\mathcal{E}(\mathcal{B})\}.
\end{equation}
By $\cond_L^{\mathcal{A}}$ we denote the average over all elements in $\Omega_L^{\mathcal{A}}$.  For
each $(\mathcal{B},(\varepsilon_k))\in\Omega_L^{\mathcal{A}}$, we define
\begin{equation}
  \label{eq:5}
  b_{\mathcal{B}}^{(\varepsilon_k)}
  = \sum_{k\in\mathcal{B}} \varepsilon_k e_k
  \qquad\text{and}\qquad
  d_{\mathcal{B}}^{(\varepsilon_k)}
  = \sum_{k\in\mathcal{B}} \varepsilon_k f_k.
\end{equation}
Before turning to the next Lemma, we define $\nu\colon\mathbb{N}\to [0,\infty)$ by
\begin{equation}\label{eq:fundamentals:1}
  \nu(n)
  = \sup\Bigl\{
  \min\Bigl(
  \max_{l\in\mathcal{A}} \Bigl\|\sum_{k\in\mathcal{A}\setminus\{l\}}e_k\Bigr\|_X,
  \max_{k\in\mathcal{A}} \Bigl\|\sum_{l\in\mathcal{A}\setminus\{k\}}f_l\Bigr\|_Y
  \Bigr) :
  \mathcal{A}\subset\mathbb{N},\ |\mathcal{A}| = n
  \Bigr\}.
\end{equation}

The following lemma uses a randomization technique
from~\cite{lechner:motakis:mueller:schlumprecht:2020:2}.  In contrast to the randomization over all
possible choices of signs in~\cite{lechner:motakis:mueller:schlumprecht:2020:2}, we average in
\Cref{lem:diag} over all signs in $\mathcal{E}(\mathcal{B})$.  This enables us to use
\Cref{lem:annihil} which is essential for the non-separable case (see \Cref{thm:qual:2}), but has
the drawback that it introduces a negative bias when averaging $\varepsilon_k\varepsilon_l$,
$k\neq l$ (see~\eqref{eq:7}).  This bias can be traced to the term $A_2$ in~\eqref{eq:8} and that in
turn, by averaging over possible choices of $\mathcal{B}\subset\mathcal{A}$, leads to the term $B_2$
in~\eqref{eq:11}, where it is finally controlled in terms of $\nu$.
\begin{lem}\label{lem:diag}
  Let the dual pair $(X,Y,\langle\cdot,\cdot\rangle)$ together with the sequences $(e_j)$, $(f_j)$
  satisfy \textrefp[B]{item:1}--\textrefp[B]{item:5}.  Let $T\colon X\to X$ denote a bounded linear
  operator such that $\delta := \inf_j \langle Te_j, f_j\rangle > 0$.  Let $L\in 2\cdot\mathbb{N}$,
  $N\in\mathbb{N}$ with $N\geq L$ and pick any $\mathcal{A}\subset\mathbb{N}$ with
  $|\mathcal{A}|=N$.  Then
  \begin{equation*}
    \cond_L^{\mathcal{A}} \langle T b_{\mathcal{B}}^{(\varepsilon_k)},
    d_{\mathcal{B}}^{(\varepsilon_k)}\rangle
    \geq \Bigl[
    \delta - C_d\frac{\|T\|}{N-1}\nu(N)
    \Bigr]\cdot L.
  \end{equation*}
\end{lem}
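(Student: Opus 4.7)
The plan is to expand the bilinear form $\langle Tb_{\mathcal{B}}^{(\varepsilon_k)}, d_{\mathcal{B}}^{(\varepsilon_k)}\rangle$ as a diagonal contribution plus an off-diagonal contribution and to exchange the order of averaging, first over the signs $(\varepsilon_k)\in\mathcal{E}(\mathcal{B})$ and then over $\mathcal{B}\subset\mathcal{A}$. Expanding gives
\begin{equation*}
  \langle T b_{\mathcal{B}}^{(\varepsilon_k)}, d_{\mathcal{B}}^{(\varepsilon_k)}\rangle
  = \sum_{k\in\mathcal{B}} \langle Te_k, f_k\rangle
  + \sum_{\substack{k,l\in\mathcal{B}\\ k\neq l}} \varepsilon_k\varepsilon_l \langle Te_k, f_l\rangle,
\end{equation*}
using $\varepsilon_k^2=1$. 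The diagonal term is independent of the signs, while the off-diagonal term is the one that will pick up the announced negative bias.

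The first main step is to compute the expectation of $\varepsilon_k\varepsilon_l$ under the uniform distribution on $\mathcal{E}(\mathcal{B})$, i.e.\ over sign vectors of length $L$ with exactly $L/2$ pluses. A short combinatorial count of the pairs $(\varepsilon_k,\varepsilon_l)\in\{\pm 1\}^2$ gives
\begin{equation*}
  \cond_\varepsilon \varepsilon_k\varepsilon_l
  = \frac{2(L/2)(L/2-1)-2(L/2)^2}{L(L-1)}
  = -\frac{1}{L-1},
  \qquad k\neq l,
\end{equation*}
which explains the $-1/(L-1)$ bias announced in the lemma discussion. The second step is the standard averaging of an indicator of $k\in\mathcal{B}$ (resp.\ $\{k,l\}\subset\mathcal{B}$) over uniformly chosen $\mathcal{B}\subset\mathcal{A}$ with $|\mathcal{B}|=L$, producing the weights $L/N$ and $L(L-1)/(N(N-1))$. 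Multiplying the two averages, the factor $L-1$ cancels and we arrive at
\begin{equation*}
  \cond_L^{\mathcal{A}} \langle T b_{\mathcal{B}}^{(\varepsilon_k)}, d_{\mathcal{B}}^{(\varepsilon_k)}\rangle
  = \frac{L}{N}\sum_{k\in\mathcal{A}}\langle Te_k,f_k\rangle
  - \frac{L}{N(N-1)} \sum_{\substack{k,l\in\mathcal{A}\\ k\neq l}} \langle Te_k, f_l\rangle.
\end{equation*}

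The final step is to bound the two contributions. The first is at least $L\delta$ by the large-diagonal assumption $\langle Te_k,f_k\rangle\geq \delta$. For the off-diagonal sum I would rewrite it in two symmetric ways, either as $\sum_{k\in\mathcal{A}} \langle Te_k, \sum_{l\in\mathcal{A}\setminus\{k\}} f_l\rangle$ or as $\sum_{l\in\mathcal{A}} \langle T\sum_{k\in\mathcal{A}\setminus\{l\}} e_k, f_l\rangle$, apply (B3) together with $\|T\|$ in either form, and take the minimum of the two resulting bounds. This yields
\begin{equation*}
  \Bigl|\sum_{\substack{k,l\in\mathcal{A}\\ k\neq l}} \langle Te_k, f_l\rangle\Bigr|
  \leq C_d \|T\| N \min\Bigl(\max_{l\in\mathcal{A}}\Bigl\|\sum_{k\in\mathcal{A}\setminus\{l\}} e_k\Bigr\|_X,\ \max_{k\in\mathcal{A}}\Bigl\|\sum_{l\in\mathcal{A}\setminus\{k\}} f_l\Bigr\|_Y\Bigr)
  \leq C_d \|T\| N\, \nu(N),
\end{equation*}
by definition of $\nu(N)$. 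Combining the two estimates gives exactly the claimed lower bound $L[\delta - C_d\|T\|\nu(N)/(N-1)]$.

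The only delicate step is the sign computation $\cond_\varepsilon\varepsilon_k\varepsilon_l=-1/(L-1)$ and the bookkeeping to see that the $L-1$ in the denominator precisely cancels the $L-1$ coming from $L(L-1)/(N(N-1))$; everything else is routine application of (B3) and the definition of $\nu$.
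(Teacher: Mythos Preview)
Your proposal is correct and follows essentially the same approach as the paper: the paper also expands the bilinear form into diagonal and off-diagonal parts, computes the sign-average $-1/(L-1)$ via the same combinatorial count, averages over $\mathcal{B}\subset\mathcal{A}$ to obtain the identical formula $\frac{L}{N}\sum_{k\in\mathcal{A}}\langle Te_k,f_k\rangle - \frac{L}{N(N-1)}\sum_{k\neq l}\langle Te_k,f_l\rangle$, and then bounds the off-diagonal sum by rewriting it in the two symmetric ways and taking the minimum, invoking \textrefp[B]{item:3} and the definition of $\nu$.
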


\begin{proof}
  Let $\mathcal{B}\subset\mathcal{A}$ with $|\mathcal{B}| = L$ be fixed and note that
  by~\eqref{eq:5}, \eqref{eq:4} and~\eqref{eq:1}
  \begin{equation}\label{eq:6}
    \frac{1}{\binom{L}{L/2}}\sum_{(\varepsilon_k)\in\mathcal{E}(\mathcal{B})}
    \langle T b_{\mathcal{B}}^{(\varepsilon_k)}, d_{\mathcal{B}}^{(\varepsilon_k)}\rangle
    = \sum_{k\in\mathcal{B}} \langle T e_k, f_k\rangle
    + \sum_{k\neq l\in\mathcal{B}}
    \frac{1}{\binom{L}{L/2}}\sum_{(\varepsilon_k)\in\mathcal{E}(\mathcal{B})}
    \varepsilon_k\varepsilon_l \langle T e_k, f_l\rangle.
  \end{equation}
  A straightforward calculation shows that
  \begin{equation}\label{eq:7}
    \frac{1}{\binom{L}{L/2}}\sum_{(\varepsilon_k)\in\mathcal{E}(\mathcal{B})}
    \varepsilon_k\varepsilon_l
    = \frac{-1}{L-1},
    \qquad k\neq l.
  \end{equation}
  Combining~\eqref{eq:6} with~\eqref{eq:7} yields
  \begin{equation}
    \label{eq:8}
    \frac{1}{\binom{L}{L/2}}\sum_{(\varepsilon_k)\in\mathcal{E}(\mathcal{B})}
    \langle T b_{\mathcal{B}}^{(\varepsilon_k)}, d_{\mathcal{B}}^{(\varepsilon_k)}\rangle
    = \sum_{k\in\mathcal{B}} \langle T e_k, f_k\rangle
    - \frac{1}{L-1}\sum_{k\neq l\in\mathcal{B}} \langle T e_k, f_l\rangle
    = A_1 - \frac{1}{L-1} A_2.
  \end{equation}

  We will now separately average $A_1$ and $A_2$ over all $\binom{N}{L}$ possible selections
  $\mathcal{B}\subset\mathcal{A}$ with $|\mathcal{B}|=L$.  First, averaging $A_1$ yields
  \begin{align*}
    \frac{1}{\binom{N}{L}}\sum_{\substack{\mathcal{B}\subset\mathcal{A}\\|\mathcal{B}|=L}} A_1
    &= \frac{1}{\binom{N}{L}}\sum_{\substack{\mathcal{B}\subset\mathcal{A}\\|\mathcal{B}|=L}}
    \sum_{k\in\mathcal{B}} \langle T e_k, f_k\rangle
    = \sum_{k\in\mathcal{A}} \langle T e_k, f_k\rangle
    \frac{|\{\mathcal{B}\subset A : \mathcal{B}\ni k,\ |\mathcal{B}|=L\}|}{\binom{N}{L}}\\
    &= \frac{\binom{N-1}{L-1}}{\binom{N}{L}} \sum_{k\in\mathcal{A}} \langle T e_k, f_k\rangle
  \end{align*}
  and we record
  \begin{equation}
    \label{eq:9}
    \frac{1}{\binom{N}{L}}\sum_{\substack{\mathcal{B}\subset\mathcal{A}\\|\mathcal{B}|=L}} A_1
    = \frac{L}{N} \sum_{k\in\mathcal{A}} \langle T e_k, f_k\rangle.
  \end{equation}
  Secondly, averaging $A_2$ gives us
  \begin{align*}
    \frac{1}{\binom{N}{L}}\sum_{\substack{\mathcal{B}\subset\mathcal{A}\\|\mathcal{B}|=L}} A_2
    &= \frac{1}{\binom{N}{L}}\sum_{\substack{\mathcal{B}\subset\mathcal{A}\\|\mathcal{B}|=L}}
    \sum_{k\neq l\in\mathcal{B}} \langle T e_k, f_l\rangle
    = \sum_{k\neq l\in\mathcal{A}} \langle T e_k, f_l\rangle
    \frac{|\{\mathcal{B}\subset A : \mathcal{B}\ni k,l,\ |\mathcal{B}|=L\}|}{\binom{N}{L}}\\
    &= \frac{\binom{N-2}{L-2}}{\binom{N}{L}} \sum_{k\neq l\in\mathcal{A}} \langle T e_k, f_l\rangle,
  \end{align*}
  from which immediately follows that
  \begin{equation}
    \label{eq:10}
    \frac{1}{\binom{N}{L}}\sum_{\substack{\mathcal{B}\subset\mathcal{A}\\|\mathcal{B}|=L}} A_2
    = \frac{L(L-1)}{N(N-1)} \sum_{k\neq l\in\mathcal{A}} \langle T e_k, f_l\rangle.
  \end{equation}
  Combining~\eqref{eq:8}, \eqref{eq:9} and~\eqref{eq:10} yields
  \begin{equation}
    \label{eq:11}
    \cond_L^{\mathcal{A}}
    \langle T b_{\mathcal{B}}^{(\varepsilon_k)}, d_{\mathcal{B}}^{(\varepsilon_k)}\rangle
    = \frac{L}{N} \sum_{k\in\mathcal{A}} \langle T e_k, f_k\rangle
    - \frac{L}{N(N-1)} \sum_{k\neq l\in\mathcal{A}} \langle T e_k, f_l\rangle
    = \frac{L}{N} B_1 - \frac{L}{N(N-1)} B_2.
  \end{equation}

  By definition of $\delta$ and $\mathcal{A}$, we obtain
  \begin{equation}\label{eq:12}
    B_1
    = \sum_{k\in\mathcal{A}} \langle T e_k, f_k\rangle
    \geq \delta N.
  \end{equation}
  Now we will estimate $B_2$ in two different ways, each exploiting the linearity of
  $\langle\cdot, \cdot\rangle$ in each component.  Exploiting the linearity in the second component
  of the bilinear form and using \textrefp[B]{item:3} yields
  \begin{align*}
    B_2
    &= \sum_{k\neq l\in\mathcal{A}} \langle T e_k, f_l\rangle
      = \sum_{k\in\mathcal{A}}
      \Bigl\langle T e_k, \sum_{l\in\mathcal{A}\setminus\{k\}} f_l\Bigr\rangle
      \leq C_d \|T\| \sum_{k\in\mathcal{A}} \Bigl\|\sum_{l\in\mathcal{A}\setminus\{k\}} f_l\Bigr\|\\
    &\leq C_d \|T\| N \max_{k\in\mathcal{A}} \Bigl\|\sum_{l\in\mathcal{A}\setminus\{k\}} f_l\Bigr\|
      = C_d \|T\| N \max_{k\in\mathcal{A}} \Bigl\|\sum_{l\in\mathcal{A}\setminus\{k\}}f_l\Bigr\|_Y.
  \end{align*}
  Using the exact same steps as above but exploiting the linearity in the first component of the
  bilinear form yields
  \begin{equation*}
    B_2
    \leq C_d \|T\| N \max_{l\in\mathcal{A}} \Bigl\|\sum_{k\in\mathcal{A}\setminus\{l\}}e_k\Bigr\|_X.
  \end{equation*}
  Thus, we have the estimate
  \begin{equation}\label{eq:13}
    B_2
    \leq C_d \|T\| N \nu(N).
  \end{equation}
  Combining~\eqref{eq:11}, \eqref{eq:12} and~\eqref{eq:13} concludes the proof.
\end{proof}

For $n\in\mathbb{N}$ we define the functions $\lambda,\mu\colon \mathbb{N}\to [0,\infty)$ by
\begin{equation}\label{eq:fundamentals:2}
  \lambda(n)
  = \Bigl\| \sum_{j=1}^n e_j\Bigr\|_X
  \qquad\text{and}\qquad
  \mu(n)
  = \Bigl\| \sum_{j=1}^n f_j\Bigr\|_Y.
\end{equation}
If $(e_j)$ is $C_s$-spreading, then by~\eqref{eq:fundamentals:1} and~\eqref{eq:spreading}, we obtain
\begin{equation}
  \label{eq:58}
  \nu(n)\leq C_s\min(\lambda(n-1), \mu(n-1)),
  \qquad n\in\mathbb{N}.
\end{equation}
We note that keeping track of the constants in the proof of Proposition~3.a.6 and Proposition~3.a.4
in \cite{lindenstrauss:tzafriri:1977} yields
\begin{equation}\label{eq:14}
  \lambda(n)\mu(n)\leq 2 C_uC_s n
\end{equation}

We are now ready to prove \Cref{pro:subsymm} by specializing \Cref{lem:diag} to the case where
$(e_j)$ is subsymmetric.
\begin{pro}\label{pro:subsymm}
  Let the dual pair $(X,Y,\langle\cdot,\cdot\rangle)$ together with the sequences $(e_j)$, $(f_j)$
  satisfy \textrefp[B]{item:1}--\textrefp[B]{item:5} and assume that $(e_j)$ is
  $(C_u,C_s)$-subsymmetric.  Let $T\colon X\to X$ denote a bounded linear operator such that
  $\delta := \inf_j \langle Te_j, f_j\rangle > 0$.  For $L\in 2\cdot\mathbb{N}$, $0 < \kappa < 1$ we
  define
  \begin{equation}\label{eq:15}
    N = \max \Bigl(L, 1 + \Bigl\lceil \frac{2 C_d^2 C_u C_s^3 \|T\|^2}{\kappa^2 \delta^2} \Bigr\rceil\Bigr).
  \end{equation}
  Then for each $\mathcal{A}\subset\mathbb{N}$ with $|\mathcal{A}|=N$ we have
  \begin{equation}\label{eq:16}
    \cond_L^{\mathcal{A}} \langle T b_{\mathcal{B}}^{(\varepsilon_k)},
    d_{\mathcal{B}}^{(\varepsilon_k)}\rangle
    \geq (1-\kappa)\delta L.
  \end{equation}
  In particular, there exists a set $\mathcal{B}\subset\mathcal{A}$ with $|\mathcal{B}|=L$ and a
  choice of signs $(\varepsilon_k)\in\mathcal{E}(\mathcal{B})$ such that
  \begin{equation}\label{eq:17}
    \Bigl\langle
    T \sum_{k\in\mathcal{B}} \varepsilon_k e_k,
    \sum_{k\in\mathcal{B}} \varepsilon_k f_k
    \Bigr\rangle
    \geq (1-\kappa)\delta L.
  \end{equation}
\end{pro}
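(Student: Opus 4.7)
The plan is to deduce \Cref{pro:subsymm} from \Cref{lem:diag} by bounding $\nu(N)$ in a way that exploits the $(C_u,C_s)$-subsymmetry, and then to choose $N$ large enough so that the error term produced by \Cref{lem:diag} is absorbed into $\kappa\delta L$.

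First I would apply \Cref{lem:diag} directly, which yields
\begin{equation*}
  \cond_L^{\mathcal{A}} \langle T b_{\mathcal{B}}^{(\varepsilon_k)}, d_{\mathcal{B}}^{(\varepsilon_k)}\rangle
  \geq \Bigl[\delta - \frac{C_d\|T\|}{N-1}\nu(N)\Bigr]\cdot L.
\end{equation*}
The goal then reduces to showing that $\frac{C_d\|T\|}{N-1}\nu(N)\leq \kappa\delta$ for the $N$ specified in~\eqref{eq:15}.

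The next step is to bound $\nu(N)$. Because $(e_j)$ is $C_s$-spreading, estimate~\eqref{eq:58} gives $\nu(N)\leq C_s\min(\lambda(N-1),\mu(N-1))$. Now I would use the arithmetic observation $\min(a,b)\leq\sqrt{ab}$ together with the subsymmetric identity~\eqref{eq:14} applied to $n=N-1$, which yields
\begin{equation*}
  \min(\lambda(N-1),\mu(N-1))
  \leq \sqrt{\lambda(N-1)\mu(N-1)}
  \leq \sqrt{2C_uC_s(N-1)}.
\end{equation*}
Combining these two bounds gives $\nu(N)\leq C_s\sqrt{2C_uC_s(N-1)}$, so that
\begin{equation*}
  \frac{C_d\|T\|}{N-1}\nu(N)
  \leq \frac{C_d\|T\|C_s\sqrt{2C_uC_s}}{\sqrt{N-1}}.
\end{equation*}

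Finally, I would solve for the condition under which the right-hand side is $\leq\kappa\delta$: squaring gives $N-1\geq\frac{2C_d^2C_uC_s^3\|T\|^2}{\kappa^2\delta^2}$, which is exactly the lower bound for $N$ built into~\eqref{eq:15}. Substituting back yields~\eqref{eq:16}. For the second assertion~\eqref{eq:17}, I would simply observe that an average of the numbers $\langle Tb_{\mathcal{B}}^{(\varepsilon_k)},d_{\mathcal{B}}^{(\varepsilon_k)}\rangle$ over $\Omega_L^{\mathcal{A}}$ is at least $(1-\kappa)\delta L$, so at least one summand must attain a value no smaller than the average; this gives the required pair $(\mathcal{B},(\varepsilon_k))$.

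There is no serious obstacle here: the proof is essentially bookkeeping. The only mildly delicate point is being careful that the bound~\eqref{eq:14} is applied at $n=N-1$ (not $N$), and that the $\min/\sqrt{}$ step is used on the \emph{right} pair so that both $C_u$ and $C_s$ appear symmetrically, matching the constant $2C_d^2C_uC_s^3$ appearing in~\eqref{eq:15}.
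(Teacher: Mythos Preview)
Your proposal is correct and follows essentially the same route as the paper's proof: apply \Cref{lem:diag}, bound $\nu(N)$ via~\eqref{eq:58} and the $\min\leq\sqrt{\,\cdot\,}$ trick combined with~\eqref{eq:14}, and then verify that the choice of $N$ in~\eqref{eq:15} makes the error term at most $\kappa\delta$. The final pigeonhole observation for~\eqref{eq:17} is also exactly what the paper does.
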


\begin{proof}
  \Cref{lem:diag} yields
  \begin{equation*}
    \cond_L^{\mathcal{A}} \langle T b_{\mathcal{B}}^{(\varepsilon_k)},
    d_{\mathcal{B}}^{(\varepsilon_k)}\rangle
    \geq \Bigl[
    \delta - C_d\frac{\|T\|}{N-1}\nu(N)
    \Bigr]\cdot L.
  \end{equation*}
  Using~\eqref{eq:58} and~\eqref{eq:14} gives us
  \begin{align*}
    \nu(N)
    &\leq C_s\min(\lambda(N-1), \mu(N-1))
      \leq C_s\sqrt{\lambda(N-1)\mu(N-1)}\\
    &\leq \sqrt{2C_uC_s^3(N-1)}.
  \end{align*}
  Thus far, we proved
  \begin{equation*}
    \cond_L^{\mathcal{A}} \langle T b_{\mathcal{B}}^{(\varepsilon_k)},
    d_{\mathcal{B}}^{(\varepsilon_k)}\rangle
    \geq \Bigl[
    \delta - C_d\frac{\|T\|}{\sqrt{N-1}}\cdot
    \sqrt{2C_uC_s^3}
    \Bigr]\cdot L.
  \end{equation*}
  The latter inequality together with~\eqref{eq:15} implies~\eqref{eq:16}.  \eqref{eq:17} directly
  follows from~\eqref{eq:16} and definition of $\cond_L^{\mathcal{A}}$.
\end{proof}

We will now restate the result~\cite[Lemma~3.2]{lechner:2019:subsymm} (which concerns weak$^*$
Schauder bases) for our dual system.
\begin{lem}\label{lem:splicing}
  Let the dual pair $(X,Y,\langle\cdot,\cdot\rangle)$ together with the sequences $(e_j)$, $(f_j)$
  satisfy \textrefp[B]{item:1}--\textrefp[B]{item:5} and assume that $(e_j)_{j=1}^\infty$ is an
  unconditional non-$\ell^1$-splicing sequence.  Let $\mathcal{I}\subset\mathbb{N}$ denote an
  infinite set, $\eta > 0$ and $y_1,\ldots,y_n\in Y$ and let $T\colon X\to X$ denote a bounded
  linear operator.  Then there exists an infinite set $\Lambda\subset\mathcal{I}$ such that
  \begin{equation*}
    \sup_{\|x\|_{X}\leq 1} |\langle T P_{\Lambda} x, y_j \rangle| \leq \eta,
    \qquad 1\leq j\leq n.
  \end{equation*}
\end{lem}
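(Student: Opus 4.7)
The plan is a proof by contradiction that combines the non-$\ell^1$-splicing hypothesis with the unconditionality of $(e_j)$: non-$\ell^1$-splicing supplies the disjoint blocks $\Lambda_j$ and a weight vector $(a_j)$, while unconditionality is what lets us freely rotate signs afterwards so that all pairings contribute constructively.

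Assume the conclusion fails, so every infinite $\Lambda\subset\mathcal{I}$ carries a unit vector $x$ and an index $k\in\{1,\ldots,n\}$ with $|\langle TP_\Lambda x, y_k\rangle|>\eta$. Fix $\theta>0$ (to be specified at the end), apply the non-$\ell^1$-splicing property to $(\mathcal{I},\theta)$ to obtain pairwise disjoint infinite sets $(\Lambda_j)_{j=1}^\infty\subset\mathcal{I}$ with the universal combination property, and use the failing-conclusion assumption on each $\Lambda_j$ to pick $k(j)\in\{1,\ldots,n\}$ and $x_j\in X$, $\|x_j\|_X\leq 1$, with $|\langle TP_{\Lambda_j}x_j,y_{k(j)}\rangle|>\eta$. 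Then apply the universality in the non-$\ell^1$-splicing property to the sequence $(x_j)$, obtaining scalars $(a_j)$ with $\sum_j|a_j|=1$ and $\bigl\|\sum_j a_jP_{\Lambda_j}x_j\bigr\|_X\leq\theta$.

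Next I would partition $\mathbb{N}=\bigsqcup_{k=1}^n J_k$ with $J_k=\{j:k(j)=k\}$ and use pigeonhole on the $\ell^1$-mass to pick $k^*$ with $\sum_{j\in J_{k^*}}|a_j|\geq 1/n$. Choose signs $\epsilon_j:=\mathrm{sign}\bigl(a_j\langle TP_{\Lambda_j}x_j,y_{k^*}\rangle\bigr)\in\{\pm 1\}$, so that $\epsilon_j a_j\langle TP_{\Lambda_j}x_j,y_{k^*}\rangle=|a_j|\,|\langle TP_{\Lambda_j}x_j,y_{k^*}\rangle|\geq 0$ term by term. By $C_u$-unconditionality applied to the coordinate blocks supported on the disjoint $\Lambda_j$'s, $\bigl\|\sum_j\epsilon_j a_jP_{\Lambda_j}x_j\bigr\|_X\leq C_u\theta$. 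Pairing with $y_{k^*}$ and invoking \textrefp[B]{item:3} gives the upper bound $C_dC_u\theta\|T\|\|y_{k^*}\|_Y$, while the summand-level nonnegativity together with $|\langle TP_{\Lambda_j}x_j,y_{k^*}\rangle|>\eta$ for $j\in J_{k^*}$ gives the lower bound $\eta\sum_{j\in J_{k^*}}|a_j|\geq\eta/n$. Selecting $\theta<\eta/(nC_dC_u\|T\|\max_k\|y_k\|_Y)$ then delivers the contradiction.

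The main subtlety I expect is the bookkeeping around pigeonhole. The $(a_j)$ coming out of non-$\ell^1$-splicing are $\ell^1$-normalized on all of $\mathbb{N}$, not on the index set $J_{k^*}$ of interest, so a naive subsequence argument would lose the mass; the fix is to push pigeonhole to the weights $(a_j)$ themselves, securing the lower bound $1/n$ at the cost of a factor $n$ absorbed into $\theta$. The sign-flipping via unconditionality is the other delicate ingredient, since it reconciles the a priori unrelated signs of $a_j$ and of $\langle TP_{\Lambda_j}x_j,y_{k^*}\rangle$ at the price of a single constant $C_u$; without it, cancellations in $\sum_j a_j\langle TP_{\Lambda_j}x_j,y_{k^*}\rangle$ could eat up the $\eta/n$ lower bound.
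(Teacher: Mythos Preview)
Your argument is correct. The core mechanism---assume failure, invoke non-$\ell^1$-splicing to obtain disjoint blocks $(\Lambda_j)$ and an $\ell^1$-combination of small norm, then flip signs via unconditionality so that the pairings add constructively and force a contradiction---is exactly the engine behind the $n=1$ case, which the paper does not reprove but cites from~\cite[Lemma~3.2]{lechner:2019:subsymm}.

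Where you and the paper diverge is in the passage from $n=1$ to general $n$. The paper treats the $n=1$ case as a black box and simply iterates: apply it to $y_1$ on $\mathcal{I}$ to get $\Lambda_1$, then to $y_2$ on $\Lambda_1$ to get $\Lambda_2\subset\Lambda_1$, and so on, finally taking $\Lambda=\Lambda_n$ and using $P_\Lambda=P_{\Lambda_j}P_\Lambda$ together with $\|P_\Lambda\|\leq C_u$. You instead run the contradiction argument once for all $n$ functionals simultaneously, handling the multiplicity by a pigeonhole on the $\ell^1$-weights $(a_j)$ to locate a single $y_{k^*}$ carrying at least $1/n$ of the mass. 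Both routes are short; yours is self-contained (it actually contains the $n=1$ argument rather than citing it) and avoids the nesting, at the price of the extra factor $n$ absorbed into $\theta$. The paper's route is more modular and makes the reduction to $n=1$ explicit.

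One small point worth noting for completeness: the interchange $\langle T(\sum_j \epsilon_j a_j P_{\Lambda_j} x_j), y_{k^*}\rangle = \sum_j \epsilon_j a_j \langle TP_{\Lambda_j} x_j, y_{k^*}\rangle$ is justified because $(a_j)\in\ell^1$ and $\|P_{\Lambda_j}x_j\|\leq C_u$ give norm-convergence of the series, after which norm-continuity of $T$ and \textrefp[B]{item:3} do the rest. You implicitly use this, and it is indeed fine.
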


\begin{proof}
  The same proof given for~\cite[Lemma~3.2]{lechner:2019:subsymm} applies to our case for $n=1$ and
  is therefore omitted.  Thus we may assume that \Cref{lem:splicing} has already been established
  for $n=1$.

  We will now inductively apply the Lemma.  In the first step, we apply the Lemma with $n=1$ to
  $y_1$ and obtain an infinite set $\Lambda_1\subset\mathcal{I}$ such that
  $\sup_{\|x\|_{X}\leq 1} |\langle TP_{\Lambda} x, y_1 \rangle| \leq \eta/C_u$.  Next, we apply the
  Lemma with $n=1$ to $\mathcal{I}=\Lambda_1$ and $y_2$ and obtain an infinite set
  $\Lambda_2\subset\Lambda_1$ such that
  $\sup_{\|x\|_{X}\leq 1} |\langle TP_{\Lambda_2} x, y_2 \rangle| \leq \eta/C_u$.  Continuing in
  this manner and stopping after $n$ steps yields infinite sets
  $\Lambda_1\supset\Lambda_2\supset\cdots\supset\Lambda_n$ such that
  $\sup_{\|x\|_{X}\leq 1} |\langle TP_{\Lambda_j} x, y_j \rangle| \leq \eta/C_u$, $1\leq j\leq n$.
  Defining $\Lambda = \Lambda_n$ and observing that for $x\in X$ and $1\leq j\leq n$,
  unconditionality yields that
  \begin{equation*}
    |\langle TP_{\Lambda} x, y_j \rangle|
    = |\langle TP_{\Lambda_j}P_{\Lambda} x, y_j \rangle|
    \leq \eta \|P_{\Lambda} x\|/C_u
    \leq \eta \|x\|
  \end{equation*}
  as claimed.
\end{proof}

The following proposition estimates our basic factorization operators $B,Q$ acting on a subsymmetric
sequence $(e_j)$.
\begin{pro}\label{pro:emb-proj}
  Let the dual pair $(X,Y,\langle\cdot,\cdot\rangle)$ together with the sequences $(e_j)$, $(f_j)$
  satisfy \textrefp[B]{item:1}--\textrefp[B]{item:5} and assume that $(e_j)_{j=1}^\infty$ is a
  $(C_u,C_s)$-subsymmetric sequence.  Let $L\in\mathbb{N}$ and let $(\mathcal{B}_j)$ denote a
  sequence of sets $\mathcal{B}_j\subset\mathbb{N}$ with $|\mathcal{B}_j| = L$,
  $\mathcal{B}_j < \mathcal{B}_{j+1}$, $j\in\mathbb{N}$, and let
  $(\varepsilon_k)\in\{\pm 1\}^{\mathbb{N}}$.  Define
  \begin{equation}\label{eq:18}
    b_j
    = \sum_{k\in\mathcal{B}_j} \varepsilon_k e_k
    \qquad\text{and}\qquad
    d_j
    = \sum_{k\in\mathcal{B}_j} \varepsilon_k f_k,
  \end{equation}
  as well as the operators $B,Q\colon X\to X$ by
  \begin{equation}\label{eq:BQ}
    Bx
    = \sum_{j=1}^\infty \langle x, f_j\rangle b_j
    \quad\text{and}\quad
    Qx
    = \sum_{j=1}^\infty \langle x, d_j\rangle e_j,
    \qquad x\in X.
  \end{equation}
  The operators are well defined, $QB = L\cdot I_{X}$ and we have the estimates
  \begin{equation}\label{eq:19}
    \|B\|, \|Q\|
    \leq C_uC_s\cdot L.
  \end{equation}
\end{pro}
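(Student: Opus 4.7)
My plan is to first establish the biorthogonality $\langle b_i, d_j\rangle = L\,\delta_{ij}$ by direct computation: using \textrefp[B]{item:4} and the disjointness of the successive blocks $\mathcal{B}_i$, we get
\[
  \langle b_i, d_j\rangle
  = \sum_{k\in\mathcal{B}_i}\sum_{l\in\mathcal{B}_j}\varepsilon_k\varepsilon_l\langle e_k,f_l\rangle
  = \sum_{k\in\mathcal{B}_i\cap\mathcal{B}_j}\varepsilon_k^2
  = L\,\delta_{ij}.
\]
From this the identity $QB = L\cdot I_X$ falls out immediately: writing $x = \sum_i\langle x,f_i\rangle e_i$, expanding $Bx$ termwise, applying $Q$, and using biorthogonality, only the diagonal survives and yields $L \sum_j\langle x,f_j\rangle e_j = L\,x$.

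The heart of the proof is the norm estimates \eqref{eq:19}. Enumerate each block as $\mathcal{B}_j=\{m_j^{(1)}<\cdots<m_j^{(L)}\}$; because the blocks are consecutive, for each fixed $s\in\{1,\dots,L\}$ the sequence $(m_j^{(s)})_j$ is strictly increasing. With $x=\sum_j a_j e_j$ this gives the two decompositions
\[
  Bx = \sum_{s=1}^{L}\sum_{j}\varepsilon_{m_j^{(s)}}a_j e_{m_j^{(s)}},
  \qquad
  Qx = \sum_{s=1}^{L}\sum_{j}\varepsilon_{m_j^{(s)}}a_{m_j^{(s)}}e_j.
\]
For $B$, I apply the triangle inequality over $s$, then remove the signs via $C_u$-unconditionality, and finally apply $C_s$-spreading in the direction $\|\sum_j a_j e_{m_j^{(s)}}\|\leq C_s\|\sum_j a_j e_j\| = C_s\|x\|$. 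Each of the $L$ summands is thus bounded by $C_uC_s\|x\|$, giving $\|B\|\leq C_uC_s L$.

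For $Q$ the order of operations has to be reversed. After the triangle inequality over $s$, I use $C_s$-spreading in the opposite direction,
\[
  \Bigl\|\sum_j a_{m_j^{(s)}}\varepsilon_{m_j^{(s)}}e_j\Bigr\|
  \leq C_s\Bigl\|\sum_j a_{m_j^{(s)}}\varepsilon_{m_j^{(s)}}e_{m_j^{(s)}}\Bigr\|,
\]
and then apply $C_u$-unconditionality \emph{once} with the multiplier $\gamma_k = \varepsilon_k$ for $k\in\bigcup_j\{m_j^{(s)}\}$ and $\gamma_k=0$ otherwise, whose supremum is $1$. This single step absorbs both the restriction to the subsequence and the signs, producing the bound $C_uC_s\|x\|$ per summand and hence $\|Q\|\leq C_uC_s L$. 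Combining restriction and signs in one application of unconditionality is the key point; a naive decomposition into two separate unconditionality steps would yield the suboptimal constant $C_u^2 C_s$.

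The main obstacle, modest but worth flagging, is this interleaving of spreading and unconditionality in the $Q$-estimate: one has to avoid the temptation of first projecting onto $\bigcup_j\mathcal{B}_j$ and then removing signs. A secondary technical point is well-definedness of the defining series for $B$ and $Q$ in the $\sigma(X,Y)$-topology when $(e_j)$ is merely a topological basis; this is handled routinely by applying the above norm bound to partial sums $S_N x=\sum_{j\leq N}a_j e_j$ (themselves controlled by $C_u\|x\|$ through unconditionality), establishing a uniform bound on $B_N x$ and $Q_N x$, and extracting a $\sigma(X,Y)$-limit using property \textrefp[B]{item:5}.
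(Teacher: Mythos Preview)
Your proof is correct and follows essentially the same route as the paper: the identical decomposition of each block $\mathcal{B}_j$ into its $L$ positions, the same triangle-inequality-then-unconditionality-then-spreading argument for $B$, and the same reversed order for $Q$ (your observation that one application of \eqref{eq:unconditional} with $\gamma_k\in\{0,\pm1\}$ handles both restriction and signs is exactly how the paper gets $C_uC_s$ rather than $C_u^2C_s$). One minor remark: your well-definedness sketch via ``extracting a $\sigma(X,Y)$-limit using \textrefp[B]{item:5}'' is not quite the right mechanism---\textrefp[B]{item:5} gives uniqueness of representations, not compactness---but in the paper's framework the convergence is built directly into the hypotheses \eqref{eq:unconditional} and \eqref{eq:spreading}, so the $\sigma(X,Y)$-convergence of $Bx$ and $Qx$ follows immediately from those properties without any extraction step.
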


\begin{proof}
  The assertions follow by essentially repeating Step~2 in the proof given for
  \cite[Theorem~1.1]{lechner:2019:subsymm}.  Since the proof is short we include it here.

  First we pick any $(n_j^l)$, such that $\{n_j^l : 1\leq l\leq L\} = \mathcal{B}_j$,
  $j\in\mathbb{N}$ and note that the sequence $(n_j^l)_j$ is increasing for each $1\leq l\leq L$.
  Now, let $x = \sum_{j=1}^\infty a_je_j$ be a series that converges in the $\sigma(X,Y)$-topology.
  Hence, by~\eqref{eq:unconditional} and~\eqref{eq:spreading}
  $Bx = \sum_{l=1}^L\sum_{j=1}^\infty \varepsilon_{n_j} a_j e_{n_j^l}$ is well defined and
  satisfies~\eqref{eq:19} as claimed.  Similarly, we see that since
  $Q x = \sum_{l=1}^L\sum_{j=1}^\infty \varepsilon_{n_j^l} a_{n_j^l} e_j$, the operator $Q$ is well
  defined and satisfies~\eqref{eq:19}.  The identity $QB = L\cdot I_X$ promptly follows from
  \textrefp[B]{item:4}.
\end{proof}

%%% Local Variables:
%%% mode: latex
%%% TeX-master: "main"
%%% End:

\section{Banach spaces with a subsymmetric basis}
\label{sec:banach-spaces-with}

In this section, we will prove our first two main results \Cref{thm:qual:1} and \Cref{thm:qual:2}.
Due to the big overlap of the arguments for \Cref{thm:qual:1} and \Cref{thm:qual:2}, we will show
them both simultaneously by bifurcating the proof in three places.  This bifurcation is visualized
by two color coded columns that will appear side by side.  The proofs for \Cref{thm:qual:1} and
\Cref{thm:qual:2} are obtained by reading the text on the white and light teal background,
respectively the text on the white and light magenta background.
\begin{myproof}[Proof of \Cref{thm:qual:1} and \Cref{thm:qual:2}]
  In the case of \Cref{thm:qual:1}, we put $f_j = e_j^*$, $1\leq j\leq n$, where $(e_j^*)$ denotes
  the sequence of biorthogonal functionals to $(e_j)$.
  
  Let $T\colon X\to X$ be a bounded linear operator which has $\delta$-large diagonal with respect
  to $(e_j)$.  Given $\eta > 0$, we want to show that $I_X$
  $\bigl(\frac{2C_u^5C_s^3}{\delta} + \eta\bigr)$-factors through $T$.  First, we define the
  multiplier $M\colon X\to X$ by $M e_j = \sign(\langle T e_j, f_j \rangle) e_j$ and note that
  by~\eqref{eq:unconditional} $M$ is a well defined operator satisfying $\|M\|\leq C_u$.  Thus, if
  we define $\widetilde T = TM$, then
  \begin{equation}\label{eq:20}
    \inf_j \langle \widetilde T e_j, f_j\rangle
    = \inf_j |\langle T e_j, f_j\rangle|
    = \delta
    > 0.
  \end{equation}

  Our strategy is to use \Cref{lem:annihil} (and additionally \Cref{lem:splicing} for the proof of
  \Cref{thm:qual:2}) to diagonalize the operator with a block basis $(b_j)$ of $(e_j)$, and
  \Cref{pro:subsymm} to transfer the large diagonal that $\widetilde{T}$ has with respect to $(e_j)$
  over to the block basis $(b_j)$.

  Throughout this proof, we will use the following constants:
  \begin{equation}\label{eq:21}
    \kappa = \frac{1}{2+\frac{4C_u^5C_s^3}{\eta\delta}},
    \quad
    \eta_i = \frac{2^{-i-1}}{C_di}\kappa\delta,\ i\in \mathbb{N},
    \quad L = 2,
    \quad
    N = \max \Bigl(2, 1 + \Bigl\lceil \frac{2 C_u C_s^3 \|T\|^2}{\kappa^2 \delta^2} \Bigr\rceil\Bigr).
  \end{equation}
  We note that if $f_j = e_j^*$, $j\in\mathbb{N}$, then $C_d=1$ (see \textrefp[B]{item:3}).

  \begin{proofstep}[Step~\theproofstep: Construction of the block basis]
    In each step of the subsequent construction, we will employ \Cref{lem:annihil},
    \Cref{lem:splicing} and \Cref{pro:subsymm} to $\widetilde{T}$ and with the constants $\kappa$,
    $L$ and $N$ as defined in~\eqref{eq:21}.  Other parameters will be explicitly specified at the
    appropriate place in the proof.

    For our initial step, we use \Cref{pro:subsymm} with
    $\mathcal{A} = \mathcal{A}_1 = \{1,\ldots,N\}$ to obtain a set
    $\mathcal{B}_1\subset\mathcal{A}_1$ with $|\mathcal{B}_1|=2$ and signs
    $(\varepsilon_k)\in\mathcal{E}(\mathcal{B}_1)$ such that
    \begin{equation}\label{eq:22}
      \langle \widetilde{T} b_1, d_1\rangle
      \geq 2(1-\kappa)\delta,
    \end{equation}
    where we defined
    \begin{equation}\label{eq:23}
      b_1
      = \sum_{k\in\mathcal{B}_1} \varepsilon_k e_k
      \qquad\text{and}\qquad
      d_1
      = \sum_{k\in\mathcal{B}_1} \varepsilon_k f_k.
    \end{equation}
    This completes the initial step of our construction.

    Assume that we have already chosen pairwise disjoint sets
    $\mathcal{B}_1 < \mathcal{B}_2 < \cdots < \mathcal{B}_{i-1}$ with $|\mathcal{B}_j|=2$,
    $1\leq j \leq i-1$, selected signs $(\varepsilon_k)\in\mathcal{E}(\mathcal{B}_j)$,
    $1\leq j \leq i-1$ and that we have defined
    \begin{equation}\label{eq:24}
      b_j
      = \sum_{k\in\mathcal{B}_j} \varepsilon_k e_k
      \quad\text{and}\quad
      d_j
      = \sum_{k\in\mathcal{B}_j} \varepsilon_k f_k,
      \qquad 1\leq j \leq i-1.
    \end{equation}
    We will now construct $b_i$ and $d_i$.
    \begin{mdframed}[backgroundcolor=black!5,leftline=false,rightline=false,topline=false,bottomline=false]
      \begin{minipage}[t]{.5\linewidth}
        \begin{mdframed}[backgroundcolor=cyan!5]
          \begin{mdframed}[backgroundcolor=gray!20]
            Schauder basis (\Cref{thm:qual:1}).
          \end{mdframed}
          \hfill\newline In this case, we first use \Cref{lem:annihil} with
          \begin{align*}
            \mathcal{I}
            = \{ k\in\mathbb{N} : k > \max \mathcal{B}_{i-1}\},\\
            \eta = \eta_i,\quad
            x_j = \widetilde{T} e_j,\quad
            y_j = d_j
          \end{align*}
          for all $1\leq j\leq i-1$ to obtain an infinite set $\Lambda_i^0\subset \mathcal{I}$ such
          that
          \begin{equation}\label{eq:25}
            \Big|\Big\langle
            \widetilde{T} \sum_{k\in \mathcal{B}} \varepsilon_k e_k,
            d_j
            \Big\rangle\Big|
            \leq \eta_i
          \end{equation}
          for all $\mathcal{B}\subset\Lambda_i^0$ with $|\mathcal{B}| = 2$, all
          $(\varepsilon_k)\in \mathcal{E}(\mathcal{B})$ and $1\leq j\leq i-1$.
        \end{mdframed}
      \end{minipage}
      \begin{minipage}[t]{.5\linewidth}
        \begin{mdframed}[backgroundcolor=magenta!5]
          \begin{mdframed}[backgroundcolor=gray!20]
            Non-$\ell^1$-splicing (\Cref{thm:qual:2}).
          \end{mdframed}
          \hfill\newline Here, we will apply \Cref{lem:splicing} with
          \begin{align*}
            \mathcal{I}
            = \{ k\in\Lambda_{i-1}^1 : k > \max \mathcal{B}_{i-1}\},\\
            \eta = \eta_i/C_u,\quad
            y_j = d_j,\ 1\leq j\leq i-1
          \end{align*}
          to obtain an infinite set $\Lambda_i^0\subset \mathcal{I}$ such that
          \begin{equation}\label{eq:26}
            \sup_{\|x\|_{X}\leq 1} |\langle \widetilde{T} P_{\Lambda_i^0} x, d_j \rangle| \leq \eta_i/C_u
          \end{equation}
          for all $1\leq j\leq i-1$.
        \end{mdframed}
      \end{minipage}
    \end{mdframed}
    In both cases, using \Cref{lem:annihil} with $\mathcal{I} = \Lambda_i^0$, $\eta=\eta_i$,
    $x_j = T b_j$, $y_j = f_j$, $1\leq j\leq i-1$ yields an infinite set
    $\Lambda_i^1\subset\Lambda_i^0$ such that
    \begin{equation}\label{eq:27}
      \sup \Bigl\{
      \Big|\Big\langle
      \widetilde{T} b_j,
      \sum_{k\in \mathcal{B}} \varepsilon_k f_k\Big\rangle\Big| :
      \mathcal{B}\subset\Lambda_i^1,\ |\mathcal{B}| = 2,\
      \varepsilon\in \mathcal{E}(\mathcal{B}),\ 1\leq j\leq i-1\Bigr\}
      \leq \eta_i.
    \end{equation}
    Now we select any $\mathcal{A}_i\subset \Lambda_i^1$ with $|\mathcal{A}_i|=N$.  Applying
    \Cref{pro:subsymm} to $\mathcal{A} = \mathcal{A}_i$ gives us a set
    $\mathcal{B}_i\subset\mathcal{A}_i$ with $|\mathcal{B}_i|=2$ and
    $(\varepsilon_k)\in\mathcal{E}(\mathcal{B}_i)$ such that
    \begin{equation}\label{eq:28}
      \langle \widetilde{T} b_i, d_i\rangle
      \geq 2(1-\kappa)\delta,
    \end{equation}
    where we put
    \begin{equation}\label{eq:29}
      b_i
      = \sum_{k\in\mathcal{B}_i} \varepsilon_k e_k
      \qquad\text{and}\qquad
      d_i
      = \sum_{k\in\mathcal{B}_i} \varepsilon_k f_k.
    \end{equation}
    This concludes the inductive construction of our block basis.  To summarize, we proved the
    following estimates.  In both cases, we obtain from~\eqref{eq:27} and~\eqref{eq:28}
    \begin{equation}\label{eq:30}
      |\langle \widetilde{T} b_j, d_i \rangle|
      \leq \eta_i
      \quad\text{and}\quad
      \langle \widetilde{T} b_i, d_i\rangle
      \geq 2(1-\kappa)\delta,
      \qquad i,j\in\mathbb{N},\ j\leq i-1.
    \end{equation}
    Using different methods, we obtained the following estimates in each case.
    \begin{mdframed}[backgroundcolor=black!5,leftline=false,rightline=false,topline=false,bottomline=false]
      \begin{minipage}[t]{.5\linewidth}
        \begin{mdframed}[backgroundcolor=cyan!5]
          \begin{mdframed}[backgroundcolor=gray!20]
            Schauder basis (\Cref{thm:qual:1}).
          \end{mdframed}
          \hfill\newline Since $\mathcal{B}_i\subset\Lambda_i^1\subset\Lambda_i^0$ with
          $|\mathcal{B}_i|=2$, we obtain from~\eqref{eq:25}
          \begin{equation}\label{eq:31}
            |\langle \widetilde{T} b_i, d_j\rangle|
            \leq \eta_i
          \end{equation}
          for all $i,j\in\mathbb{N}$ with $j\leq i-1$.
        \end{mdframed}
      \end{minipage}
      \begin{minipage}[t]{.5\linewidth}
        \begin{mdframed}[backgroundcolor=magenta!5]
          \begin{mdframed}[backgroundcolor=gray!20]
            Non-$\ell^1$-splicing (\Cref{thm:qual:2}).
          \end{mdframed}
          \hfill\newline Since
          $\mathcal{B}_i\subset\Lambda_i^1\subset\Lambda_i^0\subset\Lambda_{i-1}^1$ with
          $|\mathcal{B}_i|=2$, we obtain from~\eqref{eq:26}
          \begin{equation}\label{eq:32}
            \Bigl|\Bigl\langle
            \widetilde{T} \sum_{j=i+1}^\infty a_j b_j,
            d_i
            \Bigr\rangle\Bigr|
            \leq \frac{\eta_i}{C_u}
            \Bigl\|\sum_{j=i+1}^\infty a_j b_j\Bigr\|
          \end{equation}
          for all $i,j\in\mathbb{N}$ with $1\leq j\leq i-1$ and all $(a_j)$ such that the tail
          series $\sum_{j=i+1}^\infty a_j b_j$ converges in the $\sigma(X,Y)$ topology.
        \end{mdframed}
      \end{minipage}
    \end{mdframed}
  \end{proofstep}
  
  \begin{proofstep}[Step~\theproofstep: Factorization]\label{proof:thm:qual:2:step:ii}
    This step of the proof is similar to Step~3 in the proof of
    \cite[Theorem~1.1]{lechner:2019:subsymm}.  The most notable differences are the following: The
    operators $B$ and $Q$ have been modified, the additional parameter $\delta$ is now involved, the
    estimate for $P\widetilde T z - z$ is different in the proof of \Cref{thm:qual:1}.  For those
    reasons, we will now give a concise version of this argument.

    Let $B,Q$ be defined as in~\eqref{eq:BQ} and put $Z = B(X)$.  Utilizing \Cref{pro:emb-proj}
    yields
    \begin{equation}\label{eq:33}
      2\cdot I_{X} = QB
      \qquad\text{and}\qquad
      \|B\|, \|Q\| \leq 2C_uC_s.
    \end{equation}
    Next, we define $P\colon X\to Z$ by
    \begin{equation}\label{eq:U}
      Px
      = \sum_{j=1}^\infty
      \frac{\langle x, d_j\rangle}
      {\langle \widetilde{T} b_j,
        d_j\rangle}
      b_j,
      \qquad x\in X,
    \end{equation}
    and observe that by the large diagonal of $\widetilde{T}$ (see~\eqref{eq:20}), the
    unconditionality of $(e_j)$ (see~\eqref{eq:unconditional}) and \Cref{pro:emb-proj} we obtain
    \begin{equation}\label{eq:34}
      \|Px\|_{X}
      \leq \frac{C_u^2 C_s}{(1-\kappa)\delta} \|x\|_{X},
      \qquad x\in X.
    \end{equation}

    For each of the two cases, let $z = \sum_{j=1}^\infty a_j b_j\in Z$.
    \begin{mdframed}[backgroundcolor=black!5,leftline=false,rightline=false,topline=false,bottomline=false]
      \begin{minipage}[t]{.5\linewidth}
        \begin{mdframed}[backgroundcolor=cyan!5]
          \begin{mdframed}[backgroundcolor=gray!20]
            Schauder basis (\Cref{thm:qual:1}).
          \end{mdframed}
          \hfill\newline Since $(e_j)$ is a Schauder basis, we can exploit the continuity of
          $\widetilde{T}$ and obtain that
          \begin{equation}\label{eq:35}
            \begin{split}
              P\widetilde{T}z - z &= \sum_{i=1}^\infty \sum_{j=1}^{i-1} a_j \frac{\langle
                \widetilde{T} b_j, d_i\rangle}{\langle \widetilde{T} b_i, d_i\rangle} b_i\\
              &+ \sum_{i=1}^\infty \sum_{j=i+1}^\infty a_j \frac{\langle \widetilde{T} b_j,
                d_i\rangle} {\langle \widetilde{T} b_i, d_i\rangle} b_i.
            \end{split}
          \end{equation}
        \end{mdframed}
      \end{minipage}
      \begin{minipage}[t]{.5\linewidth}
        \begin{mdframed}[backgroundcolor=magenta!5]
          \begin{mdframed}[backgroundcolor=gray!20]
            Non-$\ell^1$-splicing (\Cref{thm:qual:2}).
          \end{mdframed}
          \hfill\newline In general, $(e_j)$ is not a Schauder basis, thus we \emph{cannot} exploit
          the continuity of $\widetilde{T}$ and we are left with
          \begin{equation}\label{eq:36}
            \begin{split}
              P\widetilde{T}z - z &= \sum_{i=1}^\infty \sum_{j=1}^{i-1} a_j \frac{\langle
                \widetilde{T} b_j, d_i\rangle} {\langle \widetilde{T} b_i, d_i\rangle} b_i\\
              &+ \sum_{i=1}^\infty \frac{\bigl\langle \widetilde{T} \sum_{j=i+1}^\infty a_j b_j,
                d_i\bigr\rangle} {\langle \widetilde{T} b_i, d_i\rangle} b_i.
            \end{split}
          \end{equation}
        \end{mdframed}
      \end{minipage}
    \end{mdframed}
    Note that $2 |a_j| = |\langle z, b_j\rangle| \leq 2 C_d \|z\|_{X}$.  This estimate together
    with~\eqref{eq:25}, \eqref{eq:30} and~\eqref{eq:21} yields (apply the triangle inequality
    to~\eqref{eq:35})
    \begin{equation}\label{eq:37}
      \|P\widetilde{T}z - z\|_{X}
      \leq \frac{\kappa}{(1-\kappa)} \|z\|_{X},
      \qquad z\in Z.
    \end{equation}
    The same result can be obtained by applying the triangle inequality to~\eqref{eq:36} and
    using~\eqref{eq:26}, \eqref{eq:30} and~\eqref{eq:21}.

    Define $J\colon Z\to X$ by $Jz = z$ and note that by~\eqref{eq:37}
    $P\widetilde{T}J\colon Z\to Z$ is invertible.  Hence, if we define
    $V = (P\widetilde{T}J)^{-1}P$, then by~\eqref{eq:34} and~\eqref{eq:37} we obtain
    \begin{equation}\label{eq:38}
      I_Z = V\widetilde{T}J
      \qquad\text{and}\qquad
      \|V\|
      \leq \frac{C_u^2 C_s}{(1-2\kappa)\delta}.
    \end{equation}
    Combining~\eqref{eq:33} with~\eqref{eq:38} and recalling that $\widetilde{T} = TM$ yields
    \begin{equation}\label{eq:39}
      I_X
      = QI_ZB/2
      = QV\widetilde{T}JB/2
      = QVTMJB/2
      = ETF,
    \end{equation}
    where we put $E = QV$ and $F = MJB/2$.  By~\eqref{eq:33}, \eqref{eq:38}, \eqref{eq:21} and since
    $\|M\|\leq C_u$, we obtain
    \begin{equation*}
      \|E\|\cdot\|F\|
      \leq \frac{2C_u^5 C_s^3}{(1-2\kappa)\delta}
      \leq \frac{2C_u^5 C_s^3}{\delta} + \eta,
    \end{equation*}
    which together with~\eqref{eq:39} proves both theorems.\qedhere
  \end{proofstep}
\end{myproof}

\begin{rem}
  It is conceivable to prove \Cref{thm:qual:1} by adapting the gliding-hump techniques
  in~\cite{casazza:lin:1974,casazza:kottman:lin:1977}.  However, these techniques do not appear to
  work in the non-separable case: The exact place in the proof where the seemingly insurmountable
  problem occurs is illustrated by comparing~\eqref{eq:35} with~\eqref{eq:36}.  In order to get from
  the formula in~\eqref{eq:36} to that of~\eqref{eq:35}, we need the following identity to be true:
  \begin{equation}\label{eq:51}
    \bigl\langle \widetilde{T} \sum_{j=i+1}^\infty a_j b_j,
    d_i\bigr\rangle
    = \sum_{j=i+1}^\infty a_j \langle \widetilde{T} b_j, d_i\rangle.
  \end{equation}
  In the non-separable case $\sum_{j=i+1}^\infty a_j b_j$ does not need to converge in the
  norm-topology, therefore we cannot use the norm-continuity of the operator $\widetilde T$ to swap
  the operator with the sum.

  To compensate we work with the lefthand side of~\eqref{eq:51} directly by exploiting that $(e_j)$
  is non-$\ell^1$-splicing (see~\cite{lechner:2019:subsymm} for a prior use of this approach),
  whereby we preselect a large subspace whose image under $T$ annihilates all previously constructed
  block basis elements.
\end{rem}

%%% Local Variables:
%%% mode: latex
%%% TeX-master: "main"
%%% End:

\section{Direct sums of Banach spaces with a subsymmetric basis}
\label{sec:direct-sums-banach}

Here, we will provide a proof for \Cref{thm:qual:3}.  The relationship between the proof
of~\cite[Theorem~1.1]{lechner:2019:subsymm} and the proof
of~\cite[Theorem~1.2]{lechner:2019:subsymm} is very similar to the relationship between the proof of
\Cref{thm:qual:2} and that of \Cref{thm:qual:3}: when the array $(e_{n,j})_{n,j}$ is linearized
correctly, we can essentially use the same construction as for the one-parameter basis $(e_j)$.
Instead of repeating large portions of a lengthy proof, we will describe the necessary adaptations.
\begin{proof}[Proof of \Cref{thm:qual:3}]
  First, we need two-parameter versions of \Cref{lem:annihil}, \Cref{lem:splicing} and
  \Cref{pro:subsymm}.  For the former two, we refer to~\cite[Lemma~5.1,
  Lemma~5.2]{lechner:2019:subsymm}.  The two-parameter version of \Cref{pro:subsymm} is just the
  coordinate-wise application of \Cref{pro:subsymm} (i.e.\@ replacing $(e_j)_j$ and $(f_j)_j$ with
  $(e_{n,j})_j$ and $(f_{n,j})_j$ for fixed $n$).

  Heavily exploiting that $(e_j)$ is non-$\ell^1$-splicing, it was possible to construct the block
  basis $(b_j)$ in the proof of~\cite[Theorem~1.2]{lechner:2019:subsymm} with the same two basic
  steps as in the proof of~\cite[Theorem~1.1]{lechner:2019:subsymm}.  The $i^{\text{th}}$ step of
  the construction reads as follows:
  \begin{enumerate}[(P1)]
  \item\label{item:6} annihilating the previously constructed vectors $T b_j$, $1\leq j\leq i-1$ by
    choosing $b_i$ in $[e_k : k\in\mathcal{A}_i]$;
  \item\label{item:7} annihilating vectors that will be constructed in later steps by preselecting
    infinitely many suitable coordinates $\mathcal{A}_{i+1}$.
  \end{enumerate}
  
  For the proof of \Cref{thm:qual:3}, we can use the same scheme as described above, i.e.\@ we
  replace the two basic construction steps~\textrefp[P]{item:6} and~\textrefp[P]{item:7} by the
  following three steps described in the $i^{\text{th}}$ inductive step in the proof of
  \Cref{thm:qual:2}:
  \begin{enumerate}[(F1)]
  \item\label{item:8} using \Cref{lem:splicing} to preselect infinitely many coordinates
    $\Lambda_i^0$ so that vectors in $T P_{\Lambda_i^0}(X)$ annihilate the previously constructed
    $d_j$, $1\leq j\leq i-1$;
  \item\label{item:9} preselecting another infinite set $\Lambda_i^1\subset\Lambda_i^0$ by utilizing
    \Cref{lem:annihil}, so that we can choose among many disjointly supported candidates in
    $[e_k : k\in\Lambda_i^1]$ for the current block basis element $d_i$, all of which annihilate
    previously constructed vectors $\widetilde T b_j$, $1\leq j\leq i-1$;
  \item\label{item:10} using \Cref{pro:subsymm} to find among those candiates block basis elements
    $b_i$ and $d_i$ so that the diagonal is kept large, i.e.\@
    $\langle\widetilde T b_i, d_i \rangle\geq 2(1-\kappa)\delta$.
  \end{enumerate}
  Finally, we note that Step~3 in the proof of~\cite[Theorem~1.2]{lechner:2019:subsymm} reduces to
  the following case: we have $\mathcal{I} = \mathbb{N}$,
  $\mathcal{J}_i = \mathcal{K}_i = \mathbb{N}$ and $H=\widetilde T$.  Also, that reduced case is
  essentially \textref[Step~]{proof:thm:qual:2:step:ii} in the proof of \Cref{thm:qual:2}.
  Rerunning those arguments with the described modifications concludes the proof.
\end{proof}

We will now show how to obtain~\cite[Theorem~1.2]{lechner:2019:subsymm} from \Cref{thm:qual:3} (see
\Cref{cor:qual:3}, below).
\begin{cor}[{\cite[Theorem~1.2]{lechner:2019:subsymm}}]\label{cor:qual:3}
  Let the dual pair $(X,Y,\langle\cdot,\cdot\rangle)$ of infinite dimensional Banach spaces $X$ and
  $Y$ and the sequences $(e_j)$, $(f_j)$ satisfy \textrefp[B]{item:1}--\textrefp[B]{item:5} with
  constant $C_d$.  Assume that $(e_j)_j$ is subsymmetric and non-$\ell^1$-splicing.  Then the Banach
  spaces $\ell^p(X)$, $1\leq p\leq\infty$ are primary.
\end{cor}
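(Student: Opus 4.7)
The plan is to combine \Cref{thm:qual:3} (applied with $X_n = X$, $Y_n = Y$, and all bases coinciding with $(e_j)$, $(f_j)$) with a selection argument and Pelczynski's decomposition method. Write $E := \ell^p(X)$; any bijection $\mathbb{N} \leftrightarrow \mathbb{N}^2$ induces a natural isomorphism $E \cong \ell^p(E)$, so in particular $E \cong E \oplus E$. Fix a bounded projection $Q\colon E \to E$; the goal is to show $Q(E)\cong E$. From $I_E = Q + (I_E - Q)$ it follows that for every basis index $(n,j)$ at least one of $|\langle Q e_{n,j}, f_{n,j}\rangle|$, $|\langle (I_E - Q)e_{n,j}, f_{n,j}\rangle|$ is $\geq \tfrac{1}{2}$.

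First I would pass to a ``large-diagonal'' subspace. After replacing $Q$ by $I_E - Q$ if necessary, a pigeonhole argument yields a strictly increasing $(n_k)\subset\mathbb{N}$ and infinite sets $\sigma_k\subset\mathbb{N}$ with $|\langle Q e_{n_k, j}, f_{n_k, j}\rangle|\geq \tfrac{1}{2}$ for all $k$ and all $j\in\sigma_k$. Set $Z := [e_{n_k, j} : k\in\mathbb{N},\ j\in\sigma_k]$. By the spreading property each $[e_j : j\in\sigma_k]$ is $C_s^2$-isomorphic to $X$ and carries a subsymmetric dual system with uniform constants; non-$\ell^1$-splicing passes from $(e_j)$ to the subsequence basis $(e_j)_{j\in\sigma_k}$ by pulling back the infinite sets $\Lambda_j$ along the enumeration $\sigma_k\hookrightarrow\mathbb{N}$. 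Hence $Z$ satisfies the hypotheses of \Cref{thm:qual:3} and $Z \cong \ell^p(X)$, while unconditionality of $(e_{n,j})_{n,j}$ provides a bounded projection $R\colon E\to Z$.

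With $\iota_Z\colon Z\hookrightarrow E$ the inclusion, consider $\widetilde Q := RQ\iota_Z\colon Z\to Z$. Because $R$ is the coordinate projection along the unconditional basis, $\langle Rx, f_{n_k, j}\rangle = \langle x, f_{n_k, j}\rangle$ for every $j\in\sigma_k$, so $\widetilde Q$ inherits $\tfrac{1}{2}$-large diagonal with respect to the array basis of $Z$. \Cref{thm:qual:3} then yields bounded $A, B\colon Z\to Z$ with $I_Z = B\widetilde Q A$; putting $U := \iota_Z A$ and $V := BR$ gives $I_Z = VQU$, so $Z\cong \ell^p(X)$ embeds as a complemented subspace of $Q(E)$. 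Since $Q(E)$ is also complemented in $E$ and $E\cong \ell^p(E)$, Pelczynski's decomposition method forces $Q(E)\cong E$, proving primarity. The main obstacle is producing $Z$ with all the required features at once --- complementation in $E$, isomorphism to $\ell^p(X)$, a non-$\ell^1$-splicing subsymmetric array structure with uniform constants, and preservation of the $\tfrac{1}{2}$-large diagonal --- of which only the persistence of non-$\ell^1$-splicing under passage to subsequence subspaces is non-routine, the remaining points reducing to standard consequences of unconditionality and spreading.
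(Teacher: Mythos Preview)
Your argument is correct and coincides with the paper's: restrict to a subarray on which $Q$ (or $I_E-Q$) has $\tfrac12$-large diagonal, invoke \Cref{thm:qual:3} on the resulting complemented subspace $Z\cong\ell^p(X)$ to factor $I_Z$ through $Q$, and finish with Pe\l czy\'nski's decomposition. The only caveat is that in the non-separable case $Z$ (and each $Z_k$) must be taken as the $\sigma(X,Y)$-closed span rather than the norm closure $[\,\cdot\,]$, as the paper does, since otherwise $Z\cong\ell^p(X)$ can fail.
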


\begin{proof}
  Let $Q\colon\ell^p(X)\to\ell^p(X)$ be a bounded projection, and let $(e_{n,j})_j$ and
  $(f_{n,j})_j$ denote a copies of $(e_j)_j$ and $(f_j)_j$ in the $n^{\text{th}}$ coordinate of the
  direct sum.  Note that at least one of the sets
  \begin{align*}
    &\bigl\{n\in\mathbb{N} : \text{the set}\
      \{j\in \mathbb{N} :
      |\langle Qe_{n,j}, f_{n,j}\rangle| \geq 1/2
      \}\ \text{is infinite}\bigr\},\\
    &\bigl\{n\in\mathbb{N} : \text{the set}\
      \{j\in \mathbb{N} :
      |\langle (I_{\ell^p(X)}-Q) e_{n,j}, f_{n,j}\rangle| \geq 1/2
      \}\ \text{is infinite}\bigr\},
  \end{align*}
  is infinite.  If the first set is infinite we put $H = Q$, and we define $H = I_{\ell^p(X)} - Q$
  otherwise.  In any case, there exist infinite sets $\mathcal{I}\subset\mathbb{N}$ and
  $\mathcal{J}\subset\mathbb{N}$, $n\in\mathcal{I}$ so that $H$ has large diagonal with respect to
  $(e_{n,j} : n\in\mathcal{I},\ j\in\mathcal{J}_n)$.  We define
  $Z_n = \overline{\spn\{ e_{n,j} : j\in\mathcal{J}_n\}}^{\sigma(X,Y)}$, $n\in\mathcal{I}$ and put
  $Z = \ell^p((Z_n : n\in\mathcal{I}))$.  Let $P\colon \ell^p(X)\to Z$ denote the natural projection
  onto $Z$, i.e.\@ $P(e_{n,j}) = e_{n,j}$ if $n\in\mathcal{I}$ and $j\in\mathcal{J}_n$ and
  $P(e_{n,j}) = 0$ otherwise.  We record that by the unconditionality of $(e_{n,j})$, the projection
  $P$ is bounded.  We note that by the subsymmetry of $(e_{n,j} : j\in\mathcal{J}_n)$, we have that
  $Z_n$ is isomorphic to $X$; hence, $Z$ is isomorphic to $\ell^p(X)$ and we denote that isomorphism
  by $R\colon \ell^p(X)\to Z$.  By \Cref{thm:qual:3},
  $(e_{n,j} : n\in\mathcal{I},\ j\in\mathcal{J}_n)$ has the factorization property; in particular,
  $I_{Z}$ factors through the operator $PH\colon Z\to Z$, i.e.\@ there exist bounded operators
  $A,B\colon Z\to Z$ such that $I_Z = BPHA$.  Consequently, $I_{\ell^p(X)} = R^{-1}BPHAR$, that is
  $I_{\ell^p(X)}$ factors through $H$.  Using Pe{\l}czy{\'n}ski's decomposition method (see e.g.\@
  Step~4 in the proof of~\cite[Theorem~1.2]{lechner:2019:subsymm}) concludes the proof.
\end{proof}

%%% Local Variables:
%%% mode: latex
%%% TeX-master: "main"
%%% End:

\section{Finite dimensional quantitative factorization results}
\label{sec:quant-fact-results}

We first prove \Cref{thm:fact:quant:uncond} and then conclude this work with the proof of
\Cref{cor:fact:quant:subs}.

In this section, $(e_j)$ denotes a normalized basis for the Banach space $X$ and $(e_j^*)$ denotes
the biorthogonal functionals to $(e_j)$.  Recall that in~\eqref{eq:fundamentals:3}, we defined the
function $\tau\colon\mathbb{N}\to[0,\infty)$ by
\begin{equation*}
  \tau(n)
  = \max\Bigl\{
  \min\Bigl(
  \max_{1\leq j\leq n} \Bigl\|\sum_{\substack{i=1\\i\neq j}}^n\varepsilon_{ij} e_i\Bigr\|_X,
  \max_{1\leq i\leq n} \Bigl\|\sum_{\substack{j=1\\j\neq i}}^n\varepsilon_{ij} e_j^*\Bigr\|_{X^*}
  \Bigr) : \varepsilon_{ij}\in\{\pm 1\},\ 1\leq i,j\leq n
  \Bigr\}.
\end{equation*}

Mainly, the proof of \Cref{thm:fact:quant:uncond} follows the method used by Bourgain and Tzafriri
for \cite[Theorem~6.1]{bourgain:tzafriri:1987}, but instead of exploiting the lower $r$-estimate
which allows them to extract a large submatrix with small entries, we use our upper estimate for
$\tau(n)$.
\begin{proof}[Proof of \Cref{thm:fact:quant:uncond}]
  We begin by defining the constants
  \begin{align}
    \label{eq:54}
    \kappa_1 &= \min\bigl(1, \eta\bigr)/4,
    &\kappa_2 &= \min\bigl(\eta, (1+\eta)^{-1}\bigr)/4,
    &\alpha &= \sqrt{\frac{\delta\kappa_1}{\Gamma}}\cdot\frac{1}{\sqrt{n\tau(n)}},
  \end{align}
  and note that $\alpha\leq 1$ by the lower estimate in~\eqref{eq:thm:fact:quant:1}.  Now let
  $(\xi_i)_{i=1}^n$ denote independent random variables over the probability space
  $(\Omega,\Sigma,\prob)$ taking values in $\{0,1\}$ with $\cond \xi_i = \alpha$, where $\cond$
  denotes the conditional expectation.  For each $\omega\in\Omega$ we define the operators
  \begin{equation}\label{eq:46}
    A(\omega)
    = R(\omega)D^{-1}TR(\omega)
    \qquad\text{and}\qquad
    R(\omega) e_i
    = \xi_i(\omega) e_i,\ 1\leq i\leq n,
  \end{equation}
  and note that
  \begin{equation}\label{eq:47}
    A\Bigl(\sum_{i=1}^n a_i e_i\Bigr)
    = \sum_{i,j=1}^n a_i\xi_i\xi_j \frac{\langle T e_i, e_j^*\rangle}{\langle T e_j, e_j^*\rangle} e_j.
  \end{equation}
  Let $x = \sum_{i=1}^n a_i e_i$ and observe that by\eqref{eq:47}
  \begin{equation*}
    \|(A - R)x\|
    \leq \sum_{i\neq j} |a_i|\xi_i\xi_j
    \frac{|\langle T e_i, e_j^*\rangle|}{|\langle T e_j, e_j^*\rangle|}
    \leq \frac{\|x\|}{\delta} \sum_{i\neq j} \xi_i\xi_j
    |\langle T e_i, e_j^*\rangle|.
  \end{equation*}
  Taking the supremum over all $\|x\|\leq 1$ and taking the expectation yields
  \begin{equation}\label{eq:48}
    \cond\|A-R\|
    \leq \frac{1}{\delta} \sum_{i\neq j} \cond \xi_i\xi_j
    |\langle T e_i, e_j^*\rangle|
    = \frac{\alpha^2}{\delta} \sum_{i\neq j} |\langle T e_i, e_j^*\rangle|.
  \end{equation}
  Defining $\varepsilon_{ij} = \sign(\langle T e_i, e_j^*\rangle)$ we obtain from~\eqref{eq:48}
  \begin{equation}\label{eq:49}
    \cond\|A-R\|
    \leq \frac{\alpha^2}{\delta}
    \sum_{i} \bigl\langle T e_i, \sum_{j:j\neq i} \varepsilon_{ij} e_j^*\bigr\rangle
    \leq \alpha^2\frac{\Gamma}{\delta} n \max_i \bigl\|\sum_{j:j\neq i} \varepsilon_{ij}e_j^*\bigr\|.
  \end{equation}
  Similarly, we obtain
  \begin{equation}\label{eq:50}
    \cond\|A-R\|
    \leq \alpha^2\frac{\Gamma}{\delta} n \max_j \bigl\|\sum_{i:i\neq j} \varepsilon_{ij}e_i\bigr\|.
  \end{equation}
  Combing~\eqref{eq:49}, \eqref{eq:50}, \eqref{eq:fundamentals:3} and~\eqref{eq:54} yields
  \begin{equation}\label{eq:52}
    \cond\|A-R\|
    \leq\alpha^2\frac{\Gamma}{\delta} n\tau(n)
    = \kappa_1.
  \end{equation}

  Now define
  \begin{equation*}
    \Omega'
    = \{\omega\in\Omega : \Bigl|\sum_{i=1}^n \xi_i(\omega) - \alpha n\Bigr| \leq \alpha n/2\}
  \end{equation*}
  and observe that by~\eqref{eq:54} and~\eqref{eq:thm:fact:quant:1}
  \begin{equation*}
    \prob(\Omega'^c)
    = \prob\Bigl(\Bigl\{\omega\in\Omega : \Bigl|\sum_{i=1}^n \xi_i - \alpha n\Bigr| > \alpha n/2\Bigr\}\Bigr)
    \leq\frac{4}{\alpha^2n^2}\cond\biggl( \sum_{i=1}^n \xi_i - \alpha n\biggr)^2
    = \frac{4(1-\alpha)}{\alpha n}
    \leq \kappa_2.
  \end{equation*}
  Thus, combining this measure estimate with~\eqref{eq:52}, we can find an $\omega_0\in \Omega'$
  such that
  \begin{equation}\label{eq:53}
    \|A(\omega_0)-R(\omega_0)\|
    \leq \kappa_1/(1-\kappa_2).
  \end{equation}
  Since $\kappa_1/(1-\kappa_2) < 1$, $A(\omega_0)$ is invertible, hence, we obtain by~\eqref{eq:54}
  \begin{equation}\label{eq:61}
    \|A^{-1}(\omega_0)\|
    \leq \frac{1-\kappa_2}{1-\kappa_1-\kappa_2}
    \leq 1 + \eta.
  \end{equation}
  We put $\sigma = \{1\leq i\leq n : \xi_i(\omega_0) = 1\}$ note that since $\omega_0\in\Omega'$ we
  have $|\sigma|\geq \alpha n/2$; appealing to~\eqref{eq:54} shows~\eqref{eq:thm:fact:quant:3}.
  Since $R(\omega_0) = R_\sigma$, \eqref{eq:thm:fact:quant:4} follows from~\eqref{eq:61} and the
  definition of $A$ (see~\eqref{eq:46}).

  We conclude the proof by defining $E\colon X_\sigma\to X_n$ by $Ex = x = R_\sigma x$ and
  $P\colon X_n\to X_\sigma$ by $P = (R_\sigma D^{-1}TR_\sigma)^{-1}R_\sigma D^{-1}$ and using that
  $\|D^{-1}\|\leq C_u/\delta$ together with~\eqref{eq:thm:fact:quant:4}.
\end{proof}

To conclude, we show that \Cref{thm:fact:quant:uncond} implies
\cite[Theorem~1.2]{lechner:2019:subsymm}, below.
\begin{proof}[Proof of \Cref{cor:fact:quant:subs}]
  Recalling~\eqref{eq:fundamentals:1}, we observe that if $(e_j)$ is $C_u$-unconditional, then
  \begin{equation}
    \label{eq:57}
    \tau(n)
    \leq C_u\nu(n),
    \qquad n\in\mathbb{N}.
  \end{equation}
  Moreover, if $(e_j)$ is $(C_u,C_s)$-subsymmetric, then by~\eqref{eq:57} and~\eqref{eq:58} we
  obtain
  \begin{equation}
    \label{eq:59}
    \tau(n)
    \leq C_uC_s\min(\lambda(n-1),\mu(n-1)),
    \qquad n\in\mathbb{N}.
  \end{equation}
  Thus, by~\eqref{eq:14}, we also have
  \begin{equation}
    \label{eq:60}
    \tau(n)
    \leq \sqrt{2C_u^3C_s^3(n-1)},
    \qquad n\in\mathbb{N}.
  \end{equation}

  Using~\eqref{eq:60}, \eqref{eq:62} and that $\tau(n)\geq 1/C_u$, $n\geq 2$, we
  obtain~\eqref{eq:thm:fact:quant:1}; thus, \Cref{thm:fact:quant:uncond} yields \eqref{eq:63}
  and~\eqref{eq:64}.  Noting that $X_k$ is $C_s$-isomorphic to $X_\sigma$, we obtain the estimate
  $\|E\|\|P\|\leq C_u^2C_s^2\frac{1+\eta}{\delta}$.
\end{proof}

%%% Local Variables:
%%% TeX-master: "main"
%%% End:

\bibliographystyle{abbrv}
\bibliography{bibliography}
% \nocite{*}

\end{document}